\newcommand{\auth}[0]{{Renan Assimos and J\"urgen Jost}}
\newcommand{\tit}[0]{{Spherical Bernstein theorems for codimension 1 and 2}}
\newcommand{\kw}[0]{{Bernstein theorem, minimal graph, harmonic map, Grassmannian, Gauss map, maximum principle}}
\numberwithin{equation}{section}
\theoremstyle{plain}
\newtheorem{thm}{Theorem}[subsection]
\newtheorem{defn}[thm]{Definition}
\newtheorem{prop}[thm]{Proposition}
\newtheorem{cor}[thm]{Corollary}
\newtheorem{rmk}[thm]{Remark}
\newtheorem{remark}[thm]{Remark}
\theoremstyle{definition}
\newtheorem{eg}[thm]{Example}
\newcommand{\C}{\mathbb{C}}
\newcommand{\R}{\mathbb{R}}
\DeclareMathOperator{\tr}{tr}
\newcommand{\bigslant}[2]{{\raisebox{.2em}{$#1$}\left/\raisebox{-.2em}{$#2$}\right.}}
\let\originalleft\left
\let\originalright\right
\renewcommand{\left}{\mathopen{}\mathclose\bgroup\originalleft}
\renewcommand{\right}{\aftergroup\egroup\originalright}
\title{\tit}
\author{\auth\thanks{Correspondence: \href{mailto:assimos@mis.mpg.de}{assimos@mis.mpg.de}, \href{mailto:jjost@mis.mpg.de}{jjost@mis.mpg.de}}}
\affil{\small Max Planck Institute for Mathematics in the Sciences\\ Leipzig, Germany}
\date{}
\begin{document}

\maketitle

\begin{abstract}
A  result of B.Solomon  \cite{solomon84} says that  a compact minimal hypersurface $M^k$ of the sphere $S^{k+1}$ with $H^1(M)=0$,  whose Gauss map omits a neighborhood of an $S^{k-1}$ equator, is totally geodesic in $S^{k+1}$. We develop a new proof strategy which can also obtain an analogous result for codimension 2 compact minimal submanifolds of $S^{k+1}$.
\end{abstract}

\textbf{Keywords: }{Bernstein theorem, minimal graph, harmonic map, Grassmannian, Gauss map, maximum principle}

\tableofcontents

\section{Introduction}

A fundamental result in the theory of minimal surfaces is Bernstein's theorem, which states that the only entire minimal graphs in Euclidean 3-space are planes. In other words, if $f(x,y)$ is a smooth function defined on all of $\R^2$ whose graph $(x,y,f(x,y))$ in $\R^3$ is a minimal surface, then $f$ is a linear function, and its graph a plane.

When the general theory of nonlinear PDEs was developed, it was a natural question whether Bernstein's theorem also holds in higher dimensions. In fact, this became one of the guiding problems of geometric analysis. On the positive side, the development lead to 
 the theorem of J. Simons \cite{simons68},  that in dimensions at most 7, any entire minimal graph is planar, but on the negative side,  E. Bombieri, E. de Giorgi, and E. Giusti \cite{bombieri69} constructed  a non-flat minimal graph in  dimension 8, and this then of course extends to higher dimensions. When one assumes, however, that  the slope of the graph is uniformly bounded,  a Bernstein-type result holds in arbitrary dimension by a theorem of J. Moser \cite{moser61}. This is the situation for codimension 1. For codimension, in \cite{assimos2018}, we could prove that Moser's Bernstein theorem is still. On the other hand, B. Lawson and R. Osserman \cite{lawson77} gave explicit counterexamples to Bernstein-type results in codimension greater or equal to 3. In fact, the cone over a Hopf map is an entire Lipschitz solution to the minimal surface system of codimension 3. Since the slope of the graph of such a cone is bounded, even a Moser-type result for codimension greater than 2 cannot hold. 

S.S. Chern \cite{chern70} has introduced the Spherical Bernstein problem. That problem concerns compact $(n-1)$-dimensional minimal submanifolds of the sphere $S^{n+m-1}$, and analogously to the Euclidean Bernstein problem just described, the aim is to prove that they are totally geodesic (i.e. equatorial) subspheres when their normal planes do not change their directions too much. Actually, the fact that this problem is concerned with {\em compact} minimal submanifolds avoids some of the technical problems of the Euclidean Bernstein problem. 

Following the strategy Hildebrandt-Jost-Widman \cite{hildebrandt80}, we shall prove Bernstein theorems by showing that the Gauss map of a minimal submanifold has to be constant under appropriate conditions. More precisely, these conditions stipulate that the image of the Gauss map omits a certain subset of its target, a Grassmann manifold. For instance, the Gauss map of a minimal Euclidean graph of codimension 1 is contained in an open hemisphere (the Grassmann manifold reduces to a sphere in the codimension 1 case). More generally, it suffices that the Gauss map omits half an equator \cite{jost2012}. In the spherical case, Solomon showed that it even suffices that the Gauss map omits a codimension two sphere, provided $M$ is a minimal hypersurface in the sphere with $H_1(M)=0$. In this paper, we develop a geometric approach to the Gauss map, drawing upon subtler aspects of the geometry of Grassmannians, that recovers Solomon's result and derives an analogous result in codimension 2, see Thm. \ref{mainthm} below. Since the Gauss map of a minimal submanifold of a Euclidean space or a sphere is harmonic by the Theorem of Ruh-Vilms, the key of our proof is a geometric maximum principle for harmonic maps, and in this context, we prove Theorem \ref{Maximal sets in $S^k$} which may be of wider interest. 

\color{black} While Spherical Bernstein theorems are interesting in their own right, they also find  strong applications in geometric analysis. Let us now recall the relation between the Spherical and the Euclidean Bernstein problems, and then state our results for the latter. Fleming’s \cite{fleming62} idea was that by re-scaling a nontrivial minimal graph in Euclidean space, one obtains a non flat minimal cone, and the intersection of that cone with the unit sphere is then a compact minimal submanifold of the latter. Therefore, conditions ruling out the latter can be translated into conditions ruling out the former; that is, spherical Bernstein theorems can prove Euclidean Bernstein theorems.  Because of the non-compact nature of minimal graphs in Euclidean space, as an important technical ingredient, one needs to invoke Allard’s regularity theory \cite{allard72}.

We thank Mario Micallef, Fernando Coda Marques, and Slava Matveev for suggestions and  helpful discussions. As well as Florio Ciaglia and Zachary Adams for helpful comments.

\section{Preliminaries}
\subsection{Harmonic maps}

Throughout this paper, $(M,g)$ and $(N,h)$ will denote Riemannian manifolds without boundaries. We will see $N \hookrightarrow (\R^L,g_{\text{euc}})$ isometrically embedded in Euclidean space (without further mention). 

Let us start recalling that the Sobolev space $W^{1,2}(M,N)$ is defined as follows.
\begin{align*}
W^{1,2}(M,N) \overset{\text{def}}{=} \,\,\,\, &\Big\{v: M\longrightarrow \R^L; \hspace{0.2cm}\left|\left|v\right|\right|^{2}_{\,W^{1,2}(M)}=\int_{M}(\Vert dv\Vert^{2} + \left|v\right|^2)\,\,dv_g< +\infty \,\,\text{and}\\
&v(x)\in N \,\,\text{for a.e.}\,\, x\in M \Big\}.
\end{align*}

\begin{defn}
    A harmonic map $\phi \in W^{1,2}(M,N)$ is a critical point of the energy functional, defined as follows.
        \begin{equation*}
        E(\phi) \overset{\text{def}}{=} \frac{1}{2} \int_{M} \Vert d\phi\Vert^{2}dvol_{g},
        \end{equation*}
        where $\Vert \cdot\Vert^{2} = \langle \cdot ,\cdot\rangle$ is the metric induced by $g$ and $h$ over the bundle $T^{*}M\otimes \phi^{-1}TN$. In local coordinates, one writes
        \begin{equation*}
        \Vert d\phi\Vert^2 = g^{ij}\frac{\partial\phi^{\beta}}{\partial x^i}\frac{\partial\phi^{\gamma}}{\partial x^j}h_{\beta\gamma}.
        \end{equation*}
\end{defn}

\noindent Deriving the Euler-Lagrange equations for $E(\cdot)$ and writing in coordinates, we get the following equation.
\begin{equation*}
0 = \tau(\phi) = \big(\Delta_{g}\phi^{\alpha} + g^{ij}\Gamma_{\beta\gamma}^{\alpha}\frac{\partial\phi^{\beta}}{\partial x^i}\frac{\partial\phi^{\gamma}}{\partial x^j}h_{\beta\gamma}\big)\frac{\partial}{\partial\phi^{\alpha}},
\end{equation*}
where  $\Gamma_{\beta\gamma}^{\alpha}$ denote the Christoffel symbols of $N$ and $\tau$ is called the tension field.

\begin{defn}[Gauss map]\label{gauss map}
        Let $M^p$ be a p-dimensional oriented embedded submanifold in the Euclidean space $\R^n$. For a given $x\in M$, the tangent space $T_xM$ is moved to the origin by parallel translation. This yields an oriented $p$-subspace of $\R^n$, i.e., a point in the oriented Grassmannian manifold $G^{+}_{p,n}$. This defines the Gauss map of the embedding $M^p\hookrightarrow \R^n$, denoted by $\gamma: M\longrightarrow G^+_{p,n}$.
\end{defn}

\begin{thm}[Ruh-Vilms \cite{ruh70}]
        Let $M^p\hookrightarrow \R^n$ be like in Definition~\ref{gauss map}. Then $\gamma$ is a harmonic map if and only if $M$ has parallel mean curvature.
\end{thm}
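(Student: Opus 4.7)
The plan is to compute the tension field of $\gamma$ explicitly using moving frames and the isometric embedding $G^+_{p,n}\hookrightarrow \Lambda^p\R^n\cong \R^{\binom{n}{p}}$, under which an oriented $p$-plane $V=\mathrm{span}(f_1,\ldots,f_p)$ is sent to the simple $p$-vector $f_1\wedge\cdots\wedge f_p$. Since the ambient linear space $\Lambda^p\R^n$ is flat, the tension field $\tau(\gamma)$ coincides with the orthogonal projection of $\Delta_M\gamma$ onto $T_{\gamma(x)}G^+_{p,n}$, so the harmonic map equation reduces to the vanishing of this tangential part. The goal is then to match that tangential part with the covariant derivative of the mean curvature vector.

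Around a fixed $x_0\in M$, I would choose a local orthonormal frame $\{e_1,\ldots,e_n\}$ of $\R^n$ along $M$ with $e_1,\ldots,e_p$ tangent and $e_{p+1},\ldots,e_n$ normal, normalized so that the tangential part of $\nabla^{\R^n}_{e_i}e_j$ vanishes at $x_0$ for $i,j\leq p$ and $\nabla^{\perp}e_\alpha(x_0)=0$ for $\alpha>p$. Writing $\gamma=e_1\wedge\cdots\wedge e_p$ and $h^\alpha_{ij}=\langle II(e_i,e_j),e_\alpha\rangle$ for the second fundamental form, this gauge gives
\begin{equation*}
d\gamma(e_i)(x_0)=\sum_{j\le p,\;\alpha>p}h^\alpha_{ij}\,\omega^\alpha_j,\qquad \omega^\alpha_j:=e_1\wedge\cdots\wedge\underset{j\text{-th slot}}{e_\alpha}\wedge\cdots\wedge e_p,
\end{equation*}
and the $\omega^\alpha_j$ are exactly a basis for $T_{\gamma(x_0)}G^+_{p,n}$.

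Differentiating once more and using the chosen frame at $x_0$, a direct computation of $\Delta_M\gamma(x_0)$ splits into terms linear in the $\omega^\alpha_j$ and quadratic terms in $h$ that land in $p$-vectors with at least two replaced slots or two normal legs in the same slot; these latter terms are orthogonal to $T_{\gamma(x_0)}G^+_{p,n}$ and hence contribute to the ambient second fundamental form rather than to $\tau(\gamma)$. The surviving tangential piece is
\begin{equation*}
\tau(\gamma)(x_0)=\sum_{j,\alpha}\Bigl(\sum_i e_i(h^\alpha_{ij})\Bigr)\omega^\alpha_j.
\end{equation*}
The Codazzi-Mainardi equation $(\nabla^\perp_{X}II)(Y,Z)=(\nabla^\perp_{Y}II)(X,Z)$, together with the symmetry $h^\alpha_{ij}=h^\alpha_{ji}$ and the vanishing of the Christoffel symbols at $x_0$, lets me rewrite $\sum_i e_i(h^\alpha_{ij})$ as $p\,\langle\nabla^\perp_{e_j}H,e_\alpha\rangle$ where $H=\frac{1}{p}\sum_i II(e_i,e_i)$. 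Since the $\omega^\alpha_j$ are linearly independent, $\tau(\gamma)(x_0)=0$ for every $x_0$ if and only if $\nabla^\perp H\equiv 0$.

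The main obstacle is the bookkeeping needed to correctly isolate the tangential component of $\Delta_M\gamma$ within $\Lambda^p\R^n$: the cross terms in which two different slots of the wedge product get altered, or a single slot receives a normal correction twice, must be identified and set aside as belonging to the normal bundle of the Grassmannian. Managing these contributions together with the clean invocation of Codazzi-Mainardi to turn the divergence $\sum_i e_i(h^\alpha_{ij})$ into a tangential derivative of $H$ is the only subtle part; once handled, the equivalence in both directions is immediate.
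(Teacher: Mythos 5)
The paper does not prove this statement; it simply cites Ruh--Vilms \cite{ruh70}. Your argument is correct and is essentially the original one: embedding $G^+_{p,n}$ in $\Lambda^p\R^n$ via Pl\"ucker, identifying $\tau(\gamma)$ with the tangential part of $\Delta_M\gamma$, computing in an adapted frame that this equals $\sum_{j,\alpha}\bigl(\sum_i e_i(h^\alpha_{ij})\bigr)\omega^\alpha_j$ while the quadratic-in-$II$ terms fall into the normal space of the Grassmannian, and then using Codazzi to convert the divergence into $p\,\langle\nabla^\perp_{e_j}H,e_\alpha\rangle$, so that $\tau(\gamma)\equiv 0$ if and only if $\nabla^\perp H\equiv 0$.
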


Using Ruh-Vilms' theorem, one may attempt to find subsets $V\subset G^{+}_{p,n}$ admitting no existence of non-constant harmonic maps $\phi:(M,g)\longrightarrow (G^+_{p,n},h_{\text{hom}})$, defined on a compact manifold $(M,g)$, with $\phi(M)\subset V$. Once such subset is found, one imposes conditions on the the Gauss image of a minimal submanifold of Euclidean space to obtain Bernstein-type results (roughly speaking, there are many issues with non-compactness of the domain that need to be carried out, among other things). In this regard, it is useful to remember  the following composition formula.
\begin{equation}
\tau(\psi\circ\phi) = d\psi\circ\tau(\phi) + \tr\nabla d\psi(d\phi,d\phi).
\end{equation}
When $\phi:(M,g)\longrightarrow (N,h)$, and $\psi: (N,h)\longrightarrow (P,i)$ is another map taking values on a Riemannian manifold $(P,i)$.

If $\phi$ is harmonic, the formula is reduced to $\tau(\psi\circ\phi)=\tr \nabla d\psi(d\phi,d\phi)$. Moreover, if $(P,i)=(\R,g_{\text{euc}})$, and $\psi$ is strictly convex, we can use the Maximum principle to prove the following.

\begin{prop}
        Let $M$ be a compact manifold (remember we always assume $\partial M=\emptyset$). Let $\phi:(M,g)\longrightarrow (N,h)$ be a harmonic map with $\phi(M)\subset A \subset N$. In addition, assume there exists a strictly convex function $f: A\longrightarrow \R$, in the geodesic sense. Then $\phi$ is constant.
\end{prop}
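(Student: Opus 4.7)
The plan is to apply the composition formula for the tension field to $f \circ \phi : M \to \mathbb{R}$, interpret it as a subharmonic equation, and conclude via the standard maximum principle on compact manifolds without boundary.

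First I would compute $\tau(f\circ\phi)$ using the composition formula recalled just before the statement. Because $\phi$ is harmonic, the term $df\circ\tau(\phi)$ vanishes, leaving
\begin{equation*}
\tau(f\circ\phi) \;=\; \tr\,\nabla df(d\phi,d\phi).
\end{equation*}
Since the target is $(\mathbb{R},g_{\text{euc}})$, the tension field of $f\circ\phi$ is simply $\Delta_g(f\circ\phi)$. The hypothesis that $f$ is strictly convex in the geodesic sense means that $\nabla df$, viewed as a symmetric bilinear form on $TN|_A$, is positive definite. Thus for every $x \in M$ and every local frame $\{e_i\}$ of $T_xM$,
\begin{equation*}
\Delta_g(f\circ\phi)(x) \;=\; \sum_i \nabla df\bigl(d\phi(e_i),d\phi(e_i)\bigr) \;\geq\; 0,
\end{equation*}
with equality at $x$ only when $d\phi(x)=0$.

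Next I would invoke the maximum principle. Since $M$ is compact without boundary, the subharmonic function $f\circ\phi$ attains its maximum at some interior point, and the strong maximum principle forces $f\circ\phi$ to be constant on $M$. Consequently $\Delta_g(f\circ\phi)\equiv 0$, and the strict positivity of $\nabla df$ then yields $d\phi\equiv 0$ on all of $M$.

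Finally, since $M$ is connected (this is the standard standing assumption; otherwise one argues component by component), $d\phi\equiv 0$ forces $\phi$ to be constant, completing the proof. The only subtle point is the identification of geodesic convexity on the subset $A\subset N$ with positive definiteness of $\nabla df$ along directions tangent to $\phi(M)$; this is immediate since $\nabla df$ here denotes the Hessian of $f$ with respect to the Levi-Civita connection of $(N,h)$, and geodesic strict convexity means precisely that this Hessian is positive definite on vectors tangent to $N$ at points of $A$, which is all that is being tested by $d\phi$.
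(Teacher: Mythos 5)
Your proposal is correct and follows exactly the route the paper intends: the composition formula with $\tau(\phi)=0$ makes $f\circ\phi$ subharmonic, the strong maximum principle on the compact boundaryless $M$ forces it to be constant, and strict convexity of $f$ then kills $d\phi$. The paper itself only sketches this (stating the reduced formula $\tau(\psi\circ\phi)=\tr\nabla d\psi(d\phi,d\phi)$ and invoking the maximum principle), so your write-up is simply a fleshed-out version of the same argument.
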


\noindent For compact minimal submanifolds of spheres $M^k\subset S^{n}\subset \R^{n+1}$, we have the Gauss map also given by
\begin{equation}
\gamma:M\longrightarrow G^+_{k,n}
\end{equation}
and in the case of hypersurfaces, the target becomes $S^{k+1}$.

In view of the above proposition, to obtain such subset $V\subset G^{+}_{p,n}$, one tries to find strictly convex functions $f:V\longrightarrow \R$ and conditions on the domain $M^k$ so that its Gauss image satisfies $\gamma(M)\subseteq V$. This strategy has been used vastly, as one can see in Hildebrandt-Jost-Widman, Jost-Xin, and others \cite{hildebrandt80}, \cite{jost99}, \cite{jost2016}.

As we have discovered in \cite{assimos2018}, instead of constructing a strictly convex function, we can exploit the geometry of such subsets admitting no image of harmonic maps. An explanation on how this is done comes on the next section.

\subsection{Non-existence of harmonic maps (into subsets of spheres)}

When studying Bernstein problems with Gauss map restrictions or slope bounds, more important than the existence of harmonic maps into a given target is the non-existence of those maps into some given subset of the target.

As discussed in the previous section, we want to find subsets of an oriented Grassmannian $G^+_{p,n}$, with $p$ and $n$ suited to the specific problem, that admit no image of harmonic maps, with the exception of the constant ones. We follow our previous work \cite{assimos2018} to exploit the geometry of such subsets.  

We start recalling Sampson's maximum principle \cite{sampson78}.

\begin{thm}\label{SMP}
 Let $(M,g)$, $(N,h)$ be, respectively, a compact and a complete Riemannian manifolds. Consider a non-constant harmonic map $\phi : M \longrightarrow N$ and suppose that at a given point $x\in M$, there exists a hypersurface $S\subset N$ with definite second fundamental form at the point $y\overset{def}{=} \phi(x)$. Then no neighborhood of $x\in M$ can be mapped entirely to the concave side of $S$.
\end{thm}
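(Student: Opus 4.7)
My plan is to argue by contradiction, using the composition formula and the strong maximum principle after a careful choice of auxiliary function on the target. Suppose a neighborhood $U$ of $x$ is mapped entirely into the concave side of $S$. I first choose local coordinates $(x^1,\dots,x^n)$ centered at $y$ in which $T_y S = \{x^n=0\}$ and $S$ is realized as a graph $\{x^n = h(x^1,\dots,x^{n-1})\}$; the hypothesis that the second fundamental form of $S$ is definite at $y$ translates, after a choice of normal orientation, into the statement that $D^2 h(0)$ is positive definite and that the concave side is $\{x^n > h(x')\}$. Setting $f(p) := x^n(p) - h(x^1(p),\dots,x^{n-1}(p))$, one has $f=0$ on $S$, $f>0$ on the concave side, and the restriction of $\nabla df$ at $y$ to $T_y S$ equals $-D^2 h(0)$, which is negative definite. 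The function $u := f\circ\phi$ then satisfies $u\geq 0$ on $U$ with $u(x)=0$, so $x$ is an interior minimum.

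By the composition formula recalled earlier and the harmonicity of $\phi$, $\Delta_g u = \tr\nabla df(d\phi,d\phi)$. However, $\nabla df|_y$ is only negative definite on $T_y S$ and vanishes in the normal direction, so one cannot yet conclude that $u$ is superharmonic on a full neighborhood of $x$. To remedy this I would replace $f$ by $\tilde f := f\,e^{-\alpha f}$ for sufficiently large $\alpha>0$; a direct computation shows $\nabla d\tilde f|_y = \nabla df|_y - 2\alpha\,df|_y\otimes df|_y$, which for $\alpha$ large is negative definite on all of $T_y N$ and hence, by continuity, on an entire neighborhood of $y$. Since $\tilde f$ has the same zero set and the same sign as $f$ locally, $\tilde u := \tilde f\circ\phi$ still satisfies $\tilde u\geq 0$ with interior minimum $\tilde u(x)=0$ on a possibly smaller neighborhood $U'\subset U$, but now $\Delta_g\tilde u = \tr\nabla d\tilde f(d\phi,d\phi)\leq 0$ everywhere on $U'$, with strict inequality at points where $d\phi\neq 0$.

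Then Hopf's strong minimum principle, applied to the superharmonic function $\tilde u$ on $U'$, forces $\tilde u \equiv 0$ on $U'$. This means $\phi(U')\subset S$ and $\Delta_g \tilde u \equiv 0$, and the strict negative-definiteness of $\nabla d\tilde f$ on the neighborhood of $y$ then forces $d\phi\equiv 0$ on $U'$. By unique continuation for harmonic maps (analyticity of solutions to the harmonic map system on real-analytic manifolds, or Aronszajn's theorem), $\phi$ must be constant on all of $M$, contradicting non-constancy. I expect the main obstacle to be precisely this passage from the tangential negative-definiteness of $\nabla df$ at a single point to a differential inequality valid on a neighborhood of $x$, which is why the exponential modification $f\mapsto f\,e^{-\alpha f}$, designed to cancel the normal-direction defect of the Hessian, is the decisive ingredient.
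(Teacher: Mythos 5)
The paper does not give its own proof of Theorem~\ref{SMP}; it is quoted from Sampson \cite{sampson78}, and your argument is in substance Sampson's original one: reduce to a scalar function $u=f\circ\phi$ via a defining function of $S$, repair the degenerate normal direction of the Hessian by the substitution $f\mapsto f e^{-\alpha f}$ (your identity $\nabla d\tilde f|_y=\nabla df|_y-2\alpha\, df|_y\otimes df|_y$ is correct, and the Cauchy--Schwarz absorption making this negative definite for large $\alpha$ works), then apply the Hopf strong minimum principle and unique continuation for harmonic maps. The proof is correct; the only cosmetic point is that the restriction of $\nabla df$ to $T_yS$ equals $-D^2h(0)$ literally only in normal coordinates at $y$ (in general there is a Christoffel correction), but what you actually use --- that this restriction is, up to a positive factor, minus the second fundamental form and hence definite --- is coordinate-independent and valid.
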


\begin{remark}
For a geodesic ball $B(p,r)$ in a given complete Riemannian  manifold $(N,h)$, with $p\in N$ arbitrary and $0< r< c_N$, where $c_N$ is defined to be the convexity radius of $N$, we have that $\partial B(p,r)$ is a hypersurface of $N$ with definite second fundamental form at each point $q\in\partial B(p,r)$.         
\end{remark}

The following theorem \cite{assimos2018}, is an application of Sampson's maximum principle that we intend to use extensively throughout the proofs on this paper.

\begin{thm}\label{corollary of SMP}
        Let $\Gamma: [a,b]\longrightarrow N$ be a smooth embedded curve on a complete Riemannian manifold $(N,h)$. Let $r:[a,b]\longrightarrow \R_+$ be a smooth function and consider a region on $N$ defined as follows.
        \begin{equation}\mathcal{R}\overset{\text{def}}{=}\bigcup_{t\in[a,b]}B(\Gamma(t),r(t)),\end{equation}
        where $B(\cdot,\cdot)$ denotes a geodesic ball and $r(t) < c_N$ for every $t\in [a,b]$. Moreover, suppose that for each $t_{0}\in (a,b)$, the set $\mathcal{R} \backslash B(\Gamma(t_{0}),r(t_{0}))$ is the union of two disjoint connected sets, namely the connected components containing $\Gamma(a)$ and $\Gamma(b)$, respectively. Then there exists no compact manifold $(M,g)$ and non-constant harmonic map $\phi:(M,g)\longrightarrow (N,h)$ such that  $\phi(M)\subset\mathcal{R}$.
\end{thm}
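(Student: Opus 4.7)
The plan is to argue by contradiction using an extremal-parameter argument along the tube together with Sampson's maximum principle. Suppose there exists a non-constant harmonic $\phi:(M,g)\to(N,h)$ with $M$ compact (without loss of generality connected) and $\phi(M)\subset\mathcal{R}$.

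First I would observe that if $\phi(M)$ were contained in a single ball $B(\Gamma(t),r(t))$, then since $r(t)<c_N$ that ball is geodesically convex; $d^2(\cdot,\Gamma(t))$ is a strictly convex function on it, and the preceding proposition on convex targets would force $\phi$ to be constant. So $\phi(M)$ must spread across many balls.

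I would then define
\[
T^+:=\sup\{t\in[a,b]:\phi(M)\cap B(\Gamma(t),r(t))\neq\emptyset\}
\]
(and dually $T^-$). By compactness of $M$ and continuity of $\phi,\Gamma,r$ I extract $x^+\in M$ with $\phi(x^+)\in\overline{B(\Gamma(T^+),r(T^+))}$; a short continuity argument using openness of balls and smoothness of $\Gamma,r$ shows $\phi(x^+)\in\partial B(\Gamma(T^+),r(T^+))$ (the endpoint case $T^+=b$ can be reduced to the interior case by symmetry using $T^-$ or a small smooth extension of $\Gamma$). I then apply Sampson's maximum principle (Theorem~\ref{SMP}) at $x^+$ with the hypersurface $S=\partial B(\Gamma(T^+),r(T^+))$, whose second fundamental form is definite since $r(T^+)<c_N$, to conclude that no neighborhood of $x^+$ maps entirely into the concave side $B(\Gamma(T^+),r(T^+))$. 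By the definition of $T^+$ the image $\phi(M)$ misses $B(\Gamma(t),r(t))$ for every $t>T^+$; combined with the disconnection hypothesis and the tube structure (each ball $B(\Gamma(t),r(t))$ with $t>T^+$ lies in $\mathcal{R}^+_{T^+}\cup B(\Gamma(T^+),r(T^+))$), this yields $\phi(M)\cap\mathcal{R}^+_{T^+}=\emptyset$, so $\phi(M)\subset\overline{B(\Gamma(T^+),r(T^+))}\cup\mathcal{R}^-_{T^+}$. A symmetric argument at $T^-$ confines $\phi(M)$ to the closed sub-tube $\overline{\bigcup_{T^-\leq t\leq T^+}B(\Gamma(t),r(t))}$.

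The hardest step will be closing the contradiction. Sampson's principle alone is qualitative and permits $\phi$ to cross $\partial B(\Gamma(T^+),r(T^+))$ into $\mathcal{R}^-_{T^+}$, so additional structure is needed. Two plausible finishes are: (i) iterating the extremal-parameter procedure on the (smaller) sub-tube and arguing that some geometric quantity --- for instance the length of $[T^-,T^+]$ or the radius at the extremum --- must strictly decrease, eventually forcing $\phi(M)$ into a single ball (contradicting step one); or (ii) constructing a strictly convex function on the closed sub-tube, e.g.\ a convex combination $\sum_i\lambda_i\, d^2(\cdot,\Gamma(t_i))$ for a finite sample $\{t_i\}\subset[T^-,T^+]$ combined with the convexity-radius bound, and then applying the convex-target proposition to conclude $\phi$ is constant.
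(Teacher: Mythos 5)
Your overall strategy (an extremal parameter along the tube combined with Sampson's maximum principle at the extremal geodesic sphere) is the right one --- the paper itself defers the detailed proof to \cite{assimos2018}, where the argument is of exactly this type --- but your proposal does not close, and you say so yourself. The missing idea is \emph{how} Theorem~\ref{SMP} produces the contradiction. That theorem forbids a point $x\in M$ with $\phi(x)$ on a geodesic sphere and a whole neighborhood of $x$ mapped into the concave side, i.e.\ into the closed ball. So the extremal parameter must be chosen so that such a configuration is forced: writing $\mathcal{R}^{-}_{t}$ and $\mathcal{R}^{+}_{t}$ for the components of $\mathcal{R}\setminus B(\Gamma(t),r(t))$ containing $\Gamma(a)$ and $\Gamma(b)$, one takes the last parameter $t^{*}$ at which $\phi(M)$ still reaches (the closure of) the far component $\mathcal{R}^{+}_{t}$, not the last parameter at which it meets the ball. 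At $t^{*}$ the image touches $\overline{\mathcal{R}^{+}_{t^{*}}}$ only along $\partial B(\Gamma(t^{*}),r(t^{*}))$, and since near such a touching point $\mathcal{R}$ consists only of the ball and the far component --- which $\phi(M)$ avoids by extremality, using precisely the two-component hypothesis --- every nearby image point lies in $\overline{B(\Gamma(t^{*}),r(t^{*}))}$. That is exactly the configuration Theorem~\ref{SMP} excludes, and the proof ends there; no iteration and no convex function is needed. You instead invoke Sampson only to record that a neighborhood of $x^{+}$ is \emph{not} mapped into the ball, which is information in the wrong direction, and you end with the confinement $\phi(M)\subset\overline{\bigcup_{T^{-}\le t\le T^{+}}B(\Gamma(t),r(t))}$, which is not a contradiction (one may well have $T^{-}=a$ and $T^{+}=b$, so nothing has been gained).

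Neither of your proposed finishes repairs this. Finish (ii) cannot work in general: a combination $\sum_{i}\lambda_{i}d^{2}(\cdot,\Gamma(t_{i}))$ is convex only where every summand is, i.e.\ within the convexity radius of every $\Gamma(t_{i})$, and the sub-tube typically extends far beyond that (in the model case of Proposition~\ref{S2 without half equator} the region is almost all of $S^{2}$); indeed, if a strictly convex function existed on $\mathcal{R}$ the theorem would already follow from the elementary proposition on convex targets and the sweepout hypothesis would be superfluous --- the whole point of Theorem~\ref{corollary of SMP} is to handle regions where no such function exists. Finish (i) names no quantity that provably decreases and no reason the iteration terminates. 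Two smaller points: your claim that $\phi(x^{+})\in\partial B(\Gamma(T^{+}),r(T^{+}))$ requires noting that $\{t:\phi(M)\cap B(\Gamma(t),r(t))\neq\emptyset\}$ is relatively open, so that a supremum $T^{+}<b$ is not attained; and the reduction of the endpoint case ``by symmetry or a small extension of $\Gamma$'' breaks down when $T^{-}=a$ and $T^{+}=b$ simultaneously, which is the generic situation and is in any case not needed once the extremal parameter is chosen as above.
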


We refer to our previous work \cite{assimos2018} for a detailed proof of this statement. The following are consequences and interpretations of this result.

\begin{cor}\label{corollary1}
        Let $\mathcal{R}$ be a region on a complete Riemannian manifold $(N,h)$ that admits a sweepout $\{S_t\}_{t\in[a,b]}$ by convex hypersurfaces $S_t$ with the following properties:
        \begin{enumerate}[label=(\roman*)]
                \item Fix $t\in [a,b]$ arbitrary. For every $\epsilon > 0$, the leaf $S_{t-\epsilon}$ lies in the concave side of $S_t$;
                
                \item For each $t\in [a,b]$, the set $\mathcal{R}\setminus S_t$ is the union of two disjoint connected sets.
        \end{enumerate}
        Then, there is no compact Riemannian manifold $(M,g)$ and non-constant harmonic map $\phi: M \longrightarrow N$, with $\phi(M)\subset \mathcal{R}$. 
\end{cor}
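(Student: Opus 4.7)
The plan is to reduce the statement to Sampson's maximum principle (Theorem \ref{SMP}), in the same spirit as Theorem \ref{corollary of SMP}, by locating an extremal leaf of the sweepout that touches the image. I argue by contradiction: assume $\phi:(M,g)\to(N,h)$ is a non-constant harmonic map with $\phi(M)\subset\mathcal{R}$. My goal is to find some $t^{*}\in[a,b]$ and a point $x_0\in M$ such that a whole neighborhood of $x_0$ is mapped into the closed concave side of the convex hypersurface $S_{t^{*}}$ while $\phi(x_0)\in S_{t^{*}}$; since $S_{t^{*}}$ has definite second fundamental form, Theorem \ref{SMP} then furnishes the contradiction.

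First, I set up the geometric bookkeeping. For each $t\in[a,b]$, hypothesis (ii) splits $\mathcal{R}\setminus S_t$ into two components $R_t^{-}$ and $R_t^{+}$, and convexity of $S_t$ lets me unambiguously label $R_t^{-}$ as the concave side. Hypothesis (i) then says that for $s<t$ the leaf $S_s$ is contained in $R_t^{-}$, so the closed regions $\overline{R_t^{-}}$ are nested, monotonically increasing in $t$, and together exhaust $\mathcal{R}$. Equivalently, the sweepout provides a continuous height function $\tau:\mathcal{R}\to[a,b]$ whose level sets are the $S_t$, and the inclusion $\phi(M)\subset\overline{R_t^{-}}$ becomes $\tau\circ\phi\le t$ pointwise on $M$.

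Since $M$ is compact, the continuous function $\tau\circ\phi:M\to[a,b]$ attains its maximum at some $x_0\in M$; set $t^{*}:=\tau(\phi(x_0))$. Then $\phi(M)\subset\overline{R_{t^{*}}^{-}}$ and $\phi(x_0)\in S_{t^{*}}$, so any sufficiently small neighborhood of $x_0$ is mapped into the closed concave side of $S_{t^{*}}$ and touches $S_{t^{*}}$ at $\phi(x_0)$. Theorem \ref{SMP} applied at $x_0$ then gives the desired contradiction.

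The main obstacle I anticipate is verifying that the sweepout hypothesis genuinely produces a well-defined, continuous height function $\tau$ together with a globally consistent labeling of concave versus convex sides across all leaves, especially near the boundary leaves $S_a$ and $S_b$ where the leaves could degenerate. This is where the topological content of conditions (i) and (ii) gets used; once it is in place, the proof collapses to a single application of Sampson's principle exactly as in the sphere-based reduction of Theorem \ref{corollary of SMP}.
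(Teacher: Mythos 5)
Your proposal is correct and follows essentially the same route as the paper, which states this corollary without a separate proof as a reinterpretation of Theorem~\ref{corollary of SMP}: the intended argument is exactly to push $\phi(M)$ to an extremal leaf of the sweepout and invoke Sampson's maximum principle there, which is what your maximization of the height function $\tau\circ\phi$ accomplishes. The technical point you flag (that the sweepout yields a well-defined parameter with nested closed concave sides, so that the supremal leaf meeting $\phi(M)$ is attained and everything lies on its closed concave side) is precisely the content of hypotheses (i) and (ii) together with compactness of $\phi(M)$, and is handled the same way in the cited proof of Theorem~\ref{corollary of SMP}.
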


\noindent  As an illustration, let us apply Theorem~\ref{corollary of SMP} and Corollary~\ref{corollary1} to spheres (see \cite{assimos2018} and \cite{jost2012}).  Using different techniques, it was shown that in $S^2\setminus(S^1\bigslant{}{\sim})_{\epsilon>0}$, where $\sim$ is the antipodal identification, there are no image of non-constant harmonic maps defined on a closed manifold. We actually show a much more general result, implicit in the following examples.  .

\begin{prop}\label{S2 without half equator}
        Let $p$ and $\mathscr{A}(p)$ be two points in $(S^2,\mathring{g})$, where $\mathscr{A}: S^2 \longrightarrow S^2$  denotes the antipodal map. Let $c: [0,1]\longrightarrow S^2$ be a connected curve such that $c(0)=p$, and $c(1)=\mathscr{A}(p)$.\footnote{it is not necessary, but to help the intuition one can suppose that $c([0,1])$ is contained in a hemisphere with respect to $p$ and $\mathscr{A}(p)$} For any $\epsilon>0$, there are no non-constant harmonic maps defined on closed manifolds with image in $S^2\setminus\left(c([0,1])\right)_{\epsilon}$, where $\left(c([0,1])\right)_{\epsilon}$ is defined as follows.
        \begin{equation*}
        \left(c([0,1])\right)_{\epsilon}\overset{\text{def}}{=}\{x\in S^2\,|\,\, d_g\left(x,c\left([0,1]\right)\right)<\epsilon\}.
        \end{equation*}
\end{prop}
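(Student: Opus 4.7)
The plan is to derive the proposition from Theorem \ref{corollary of SMP} by presenting the region $\mathcal{R} := S^2 \setminus (c([0,1]))_\epsilon$ as a union of geodesic balls of radius less than the convexity radius $c_{S^2} = \pi/2$ along a suitable curve $\Gamma$. The key geometric observation that I would exploit throughout is that, since $p$ and $\mathscr{A}(p)$ both lie on $c$ and are antipodal, every $x \in S^2$ satisfies $d(x,p) + d(x,\mathscr{A}(p)) = \pi$ (any point of $S^2$ sits on some great circle through $p$ and $\mathscr{A}(p)$), so the function $f(x) := d_g(x, c([0,1]))$ satisfies $f(x) \leq \min(d(x,p), d(x,\mathscr{A}(p))) \leq \pi/2$ everywhere. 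In particular every point of $\mathcal{R}$ lies strictly within the convexity radius of $S^2$, which is exactly what makes the hypothesis $r(t) < c_N$ of the theorem attainable.

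Next I would take $\Gamma(t) := \mathscr{A}(c(t))$ restricted to $t \in [\delta, 1-\delta]$, which for small $\delta > 0$ stays well inside $\mathcal{R}$; using $d(\mathscr{A}(y), z) = \pi - d(y,z)$ one finds $d(\Gamma(t), c) = \pi - \sup_s d(c(t), c(s))$, positive for $t$ bounded away from the endpoints and bounded above by $\pi/2$. I would then set $r(t) := d(\Gamma(t), c) - \epsilon - \eta$ for a small $\eta > 0$, which simultaneously guarantees $B(\Gamma(t), r(t)) \subset \mathcal{R}$ and $r(t) < \pi/2$. The separation condition of Theorem \ref{corollary of SMP} --- that removing an interior ball cuts the union into exactly two connected components, one for each endpoint of $\Gamma$ --- is geometrically transparent, since the union deformation retracts onto the embedded arc $\Gamma$ and excising a middle ball severs this tube.

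The main obstacle I anticipate is that the union $\bigcup_t B(\Gamma(t), r(t))$ may fail to exhaust $\mathcal{R}$: in the model case where $c$ is a half great circle, ``side poles'' equidistant ($=\pi/2$) from both $c$ and $\mathscr{A}(c)$ cannot be captured by any ball of radius strictly less than $\pi/2$ centered on $\Gamma$. To circumvent this I would switch to Corollary \ref{corollary1}, sweeping $\mathcal{R}$ instead by the level curves $S_t := \{f = t\}$ for $t \in (\epsilon, \pi/2)$. This sweepout tautologically covers $\mathcal{R}$; the separation $\mathcal{R} \setminus S_t = \{\epsilon < f < t\} \sqcup \{f > t\}$ and the monotonicity ``$S_{t-\eta}$ lies on the concave side of $S_t$'' are immediate from the definition. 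The remaining technical point is the convexity of each $S_t$ in $S^2$, which reduces to the classical fact that equidistant curves from a smooth arc at distance $<\pi/2$ have positive geodesic curvature pointing toward $c$, together with the observation that the upper bound $\pi/2$ on $f$ keeps us inside the focal distance so that no cut-locus pathologies arise.
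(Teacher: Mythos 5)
Your fallback argument --- sweeping $\mathcal{R}=S^2\setminus(c([0,1]))_\epsilon$ by the level sets $S_t=\{f=t\}$ of $f=d_g(\cdot,c([0,1]))$ --- has a genuine gap: these level sets are not convex curves, and the ``classical fact'' you invoke is false. Test it on the model case where $c$ is the half great circle from $p=(0,0,1)$ to $\mathscr{A}(p)$ through $(1,0,0)$. Then $S_t$ is the boundary of the tube of radius $t$ around the arc: it consists of two arcs of the geodesic circles $\partial B\left((0,\pm1,0),\tfrac{\pi}{2}-t\right)$ (the portions parallel to $c$) glued to two arcs of $\partial B(p,t)$ and $\partial B(\mathscr{A}(p),t)$ (the caps around the endpoints). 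On the parallel portions the geodesic curvature vector points toward $(0,\pm1,0)$, i.e.\ \emph{away} from $c$ --- equidistants of a great circle on the sphere curve toward the pole, not toward the great circle --- while on the endpoint caps it points toward $p$ resp.\ $\mathscr{A}(p)$, i.e.\ \emph{toward} $c$. So the second fundamental form of $S_t$ does not have a consistent sign along the leaf, condition (i) of Corollary~\ref{corollary1} fails on the caps (there $S_{t-\eta}=\partial B(p,t-\eta)$ lies on the \emph{convex} side of $\partial B(p,t)$), and Sampson's maximum principle cannot be applied at a touching point on a cap. Condition (ii) also fails: since the two ``side poles'' $(0,\pm1,0)$ are the only points with $f=\pi/2$, the set $\{f>t\}$ breaks into two components for $t$ near $\pi/2$, so $\mathcal{R}\setminus S_t$ has three pieces, not two. (Your first construction, balls along $\Gamma=\mathscr{A}\circ c$, you rightly abandon; note additionally that $d(\mathscr{A}(c(t)),c)$ can vanish at interior $t$ if $c$ happens to contain further antipodal pairs, so $r(t)$ need not even be positive.)

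The paper's proof avoids all of this by choosing a sweepout whose leaves are honest convex curves independent of the shape of $c$: it takes $\Gamma(t)$ to run over the great circle orthogonal to $p$ (equidistant from $p$ and $\mathscr{A}(p)$) and uses the full geodesic circles $S_t=\partial B\left(\Gamma(t),\tfrac{\pi}{2}-\tfrac{\epsilon}{2}\right)$, each of constant positive geodesic curvature. The union of these leaves is all of $S^2$ minus the $\tfrac{\epsilon}{2}$-caps around $p$ and $\mathscr{A}(p)$, so it contains $\mathcal{R}$; this family fails only the separation condition (ii), and removing the neighborhood $(c([0,1]))_\epsilon$ --- which joins $p$ to $\mathscr{A}(p)$ and hence cuts every leaf --- is exactly what restores it. If you want to salvage your approach, you should replace your level sets by such a family of genuinely convex leaves rather than by equidistants of the arc.
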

        
\begin{proof}
        Fix $\epsilon > 0$ and define
        \begin{equation*}
        \mathcal{R}\overset{\text{def}}{=}\displaystyle\bigcup_{t\in S^1}\left( \partial B\left(\Gamma(t),\frac{\pi}{2}-\frac{\epsilon}{2}\right)\bigslant{}{\sim}\right),
        \end{equation*}
        where $\Gamma(t)$ is the great circle such that $\langle\Gamma(t),p\rangle=0$, for every $t\in S^1$.
        Although $\mathcal{R}$ is by definition a sweepout by convex hypersurfaces, $\mathcal{R}$ obviously does not satisfy condition $(ii)$ in Corollary~\ref{corollary1}. But it is clear that $\mathcal{R}\setminus\left(c([0,1])\right)_{\epsilon} = S^2\setminus\left(c([0,1])\right)_{\epsilon}$ does satisfy $(ii)$. Therefore, there are no non-constant harmonic maps with image in $S^2\setminus\left(c([0,1])\right)_{\epsilon}$, see Figure~\ref{Fig2}. In particular, there are no closed geodesics in that region as well.
\end{proof}

\begin{figure} 
        \begin{picture}(100,130)
        \put(50,-110){\includegraphics[width=0.8\linewidth]{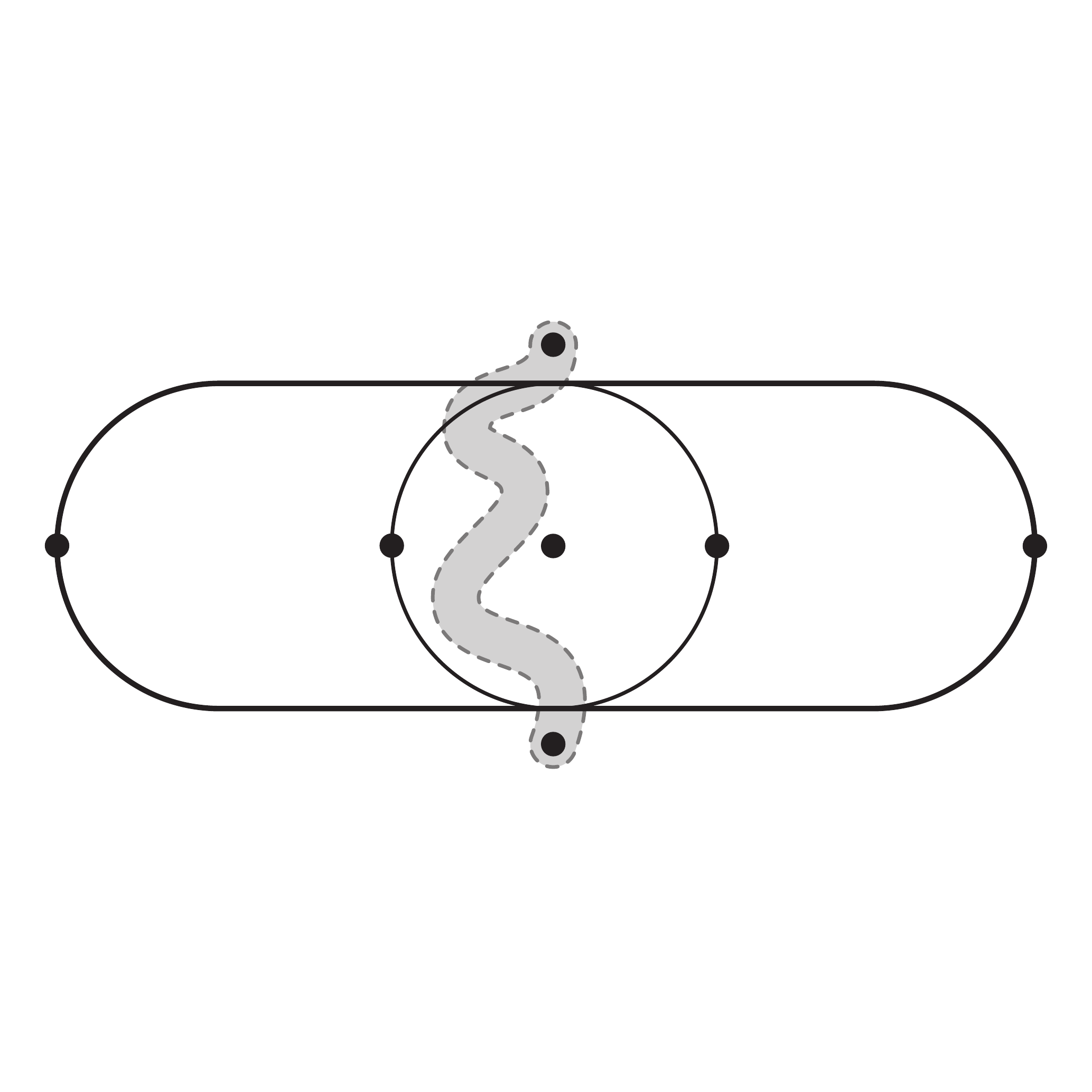}}
        \put(240,65){$e_3$}
        \put(397,65){$-e_3$}
        \put(43,65){$-e_3$}
        \put(240,133){$e_1$}
        \put(240,0){$-e_1$}
        \put(163,65){$e_2$}
        \put(292,65){$-e_2$}
        \end{picture}
        \caption{A projection of the convex sweepout of $S^2\setminus(\gamma([0,1]))_{\epsilon>0}$}
        \label{Fig2}
\end{figure}

The main idea of either the  proof of Theorem~\ref{corollary of SMP} or the above proposition is that $\partial \mathcal{R}$ is a barrier to the existence of non-constant harmonic maps. Since $\mathcal{R}$ is defined as the union of convex balls, we use Sampson's maximum principle to push the image of a possible non-constant harmonic map to this barrier. That is exactly what we do in the above example.

The classical method of looking for a strictly convex function $f:\mathcal{R}\longrightarrow \R$ is not very flexible. Once one changes the boundary of $\mathcal{R}$ slightly, one can no longer guarantee that there exists a  strictly convex function $\tilde f:\mathcal{\tilde R} \longrightarrow \R$.

Before the main theorem of this section regarding existence of harmonic maps into spheres, let us present an example. This could be regarded as a lemma that will help our intuition for the proof of the main result.

\begin{eg}\label{Sk without half equator}
        The argument in Proposition~\ref{S2 without half equator} can be adapted to the case of $S^{k+1}\setminus\left(S^{k}(x_1)\bigslant{}{\sim}\right)_{\epsilon>0}$. Here $x_1\in S^{k+1}$, $S^k(x_1)\overset{\text{def}}{=}\{p\in S^{k+1}\,|\,\, \langle p,x_1\rangle=0\}$ is an equatorial $S^k$, and $\{x_1,x_2,...,x_{k+2}\}$ is an orthonormal basis for $\R^{k+2}$. Using the region
    \begin{equation}\label{definition of R^j}
    \mathcal{R}^{k+2}\overset{\text{def}}{=}\displaystyle\bigcup_{t\in S^1}\left( \partial B\left(\Gamma^{k+2}(t),\frac{\pi}{2}-\frac{\epsilon}{2}\right)\bigslant{}{\sim}\right)
    \end{equation}
    determined by the great circle $\Gamma^{k+2}(t)\overset{\text{def}}{=}\cos(t)x_1+\sin(t)x_{k+2}$, we have that $\mathcal{R}^{k+2}=S^{k+1}\setminus(S^{k-1}(x_1,x_{k+2}))_{\epsilon>0}$, where $S^{k-1}(x_1,x_{k+2})$ denotes a totally geodesic codimension 2 sphere orthogonal to $x_1$ and $x_{k+2}$. Therefore, $(S^{k}(x_1)\bigslant{}{\sim})$ is clearly a barrier for the existence of non-constant harmonic maps.
\end{eg}

We can also find more flexible barriers like $\gamma([0,1])_{\epsilon>0}$ in the case of higher dimensions, although some extra care on the choice of the barrier is needed. Namely, we must understand the antipodal points $p,\varphi(p)$ in Example~\ref{S2 without half equator} as a $S^0$ sphere, that is, a codimension 2 totally geodesic subset. More precisely, we have the following.

\begin{thm}[Maximal sets in $S^k$]\label{Maximal sets in $S^k$}
	Let $C$ be an open connected subset of $S^{k+1}$ such that there exists an equatorial embedding $\dot\iota:S^{k-1}\longrightarrow S^{k+1}$, with $\dot\iota(S^{k-1})\subset C$. In addition, suppose that $S^{k-1}$ is homotopic (in $C$) to a point $x\in C$. Then there is no closed Riemannian manifold $M$ and non-constant harmonic map $\phi:M\longrightarrow S^{k+1}$ such that $\phi(M)\subset S^{k+1}\setminus C$.
\end{thm}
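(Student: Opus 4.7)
My plan is to argue by contradiction, generalizing the strategy of Proposition \ref{S2 without half equator} (the $S^2$ case, where the barrier is a curve between two antipodal points) to arbitrary codimension. Suppose for contradiction that $\phi: M \to S^{k+1}\setminus C$ is a non-constant harmonic map from a closed Riemannian manifold $M$. I would first choose orthonormal coordinates so that $\dot\iota(S^{k-1}) = \{x_1 = x_{k+2} = 0\} \cap S^{k+1}$, and set $\Gamma(t) = \cos(t)\, e_1 + \sin(t)\, e_{k+2}$ as the orthogonal great circle. The null-homotopy hypothesis, combined with smooth approximation, provides a smooth $k$-disk $\bar H: D^k \to C$ with $\bar H|_{\partial D^k} = \dot\iota$; write $D = \bar H(D^k)$. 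Since $C$ is open and $D$ compact, I can fix $\epsilon > 0$ with $\overline{(D)_\epsilon} \subset C$, so that $\phi(M) \subset \mathcal{R}' := S^{k+1}\setminus \overline{(D)_\epsilon}$.

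The next step is to apply Corollary \ref{corollary1} to $\mathcal{R}'$ with the sweepout from Example \ref{Sk without half equator}: the leaves $S_t = \partial B(\Gamma(t), \pi/2 - \epsilon/2)$ are convex hypersurfaces of $S^{k+1}$, and condition (i) of Corollary \ref{corollary1} holds for the same reason as in the Example. The heart of the proof would be verifying condition (ii): for each $t$, the set $\mathcal{R}'\setminus S_t$ decomposes into exactly two connected components. Intuitively, $D$ plays the same role as the polar curve $c$ in Proposition \ref{S2 without half equator}: there, the arc $c$ cut the pair-of-pants complement into two pieces, restoring (ii); here, the $k$-disk $D$ (bounded by the equatorial $\dot\iota(S^{k-1})$) cuts the higher-dimensional analogue. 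Concretely, one may attach $D$ to an equatorial hemisphere $E^k_+$ bounded by $\dot\iota(S^{k-1})$ to form a topological $k$-sphere $\Sigma := D \cup E^k_+ \subset S^{k+1}$, which by Jordan-Brouwer (after generic perturbation to an embedding) separates $S^{k+1}$ into two open $(k+1)$-balls. This separation, combined with the way $S_t$ meets $\Sigma$, is what I expect will enforce condition (ii) on $\mathcal{R}'$.

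Once condition (ii) is established, Corollary \ref{corollary1} immediately gives the desired contradiction. The main obstacle is the rigorous topological verification of condition (ii): one must track how each moving leaf $S_t$ meets the barrier $D$ --- especially for $t$ with $\Gamma(t)$ close to $D$, where the intersection may fail to be transverse --- and confirm, for every $t$, that $\mathcal{R}'\setminus S_t$ has exactly the two components predicted by $\Sigma$. This generalizes the implicit two-dimensional verification of Proposition \ref{S2 without half equator} to arbitrary codimension, and a linking argument showing that $\Gamma$ must intersect $D$ (so the sweepout center itself crosses the barrier) seems to be a necessary technical ingredient.
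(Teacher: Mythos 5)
Your proposal follows essentially the same route as the paper: the same sweepout by the geodesic spheres $\partial B(\Gamma(t),\pi/2-\epsilon/2)$ centered along the great circle orthogonal to $\dot\iota(S^{k-1})$, the same appeal to Corollary~\ref{corollary1}, and the same use of the null-homotopy to supply the extra ``cut'' that restores condition (ii) --- the paper realizes this cut as $\partial C$ itself, while you realize it as a thickened (singular) disk $D$ bounding $\dot\iota(S^{k-1})$; both versions leave the detailed verification of condition (ii) at a comparable level of sketchiness. One concrete caveat about your route: a smooth map $D^k\to S^{k+1}$ cannot in general be perturbed to an embedding once $k\geq 2$ (double points occur stably along a set of dimension $k-1$), so Jordan--Brouwer applied to the capped sphere $\Sigma=D\cup E^k_+$ is not directly available; the separation must instead come from Alexander duality or from the mod~$2$ intersection/linking argument with the singular chain $D$ that you mention at the end --- which is in fact the same linking of $\Gamma$ with $\dot\iota(S^{k-1})$ that the paper exploits implicitly when it places the contraction point $x$ on the sweepout circle $\Gamma_{V,\bar{x}}$.
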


\begin{rmk}
	If $k=1$, we say that $S^0=\{1,-1\}$ is embedded in $S^2$ as an equator if $ \dot\iota(1)=\mathscr{A}(\dot\iota(-1))$. With this definition, one can take $C$ as some $\epsilon$-neighborhood of a curve $c:[a,b]\longrightarrow S^2$ connecting the two antipodal points in $S^0\hookrightarrow S^2$, getting Proposition~\ref{S2 without half equator} as a direct corollary of the above theorem.
\end{rmk}

\begin{proof}[Proof of Theorem~\ref{Maximal sets in $S^k$}.]
	
	Let $X$ be a subset of $S^{k+1}$ containing $k-1$ antipodal points; we write $X = \{x_1,-x_1,...,x_{k-1},-x_{k-1}\}$. Moreover, suppose that whenever $i\neq j$, we have that $\langle x_i,x_j\rangle=0$.
	
	Since $S^{k-1}$ is homotopic in $C$ to a point, without loss of generality, we can pick $x_0\in C\setminus S^{k-1}$ such that $\langle x_0,y\rangle=0$, for every $y\in S^{k-1}$.
	
	For a point $z_0\in S^{k+1}$, let us define the set
	\begin{equation*}
	S^k[z_0]\overset{\text{def}}{=}\{a\in S^{k+1}\,|\,\, \langle z_0,a\rangle=0\},
	\end{equation*}
	and analogously for a point $z_1\in S^k[z_0]$, define
	\begin{equation*}
	S^{k-1}[z_0,z_1]\overset{\text{def}}{=}\{a\in S^{k+1}\,|\,\, \langle z_0,a\rangle=\langle z_1,a\rangle=0\}.
	\end{equation*} 
	
	\noindent Recursively, we may define the sets 
	\begin{equation*}
	S^{k-\alpha}[z_0,z_1,...,z_\alpha]\overset{\text{def}}{=}\{a\in S^{k+1}\,|\,\, \langle z_0,a\rangle=\langle z_1,a\rangle=...=\langle z_\alpha,a\rangle=0\}, 
	\end{equation*}
	for a point $z_\alpha\in S^{k-(\alpha-1)}[z_0,z_1,...,z_{\alpha-1}]$, and $1\leq \alpha\leq k+1$.
	
	Using the above definition, we can use the $(k-1)$ non-antipodal points in $X$ and set $x_0=x$, where $x$ was chosen above. Moreover, we can still pick a point $\bar{x}\in S^{k-(k-1)}[x,x_1,...,x_{k-1}]$, a unit tangent vector $V\in T_{\bar{x}}S^{k+1}$, and a closed geodesic
	\begin{equation*}
	\Gamma_{V,\bar{x}}:[0,1]\longrightarrow S^{k+1} 
	\end{equation*}
	such that $\Gamma_{V,\bar{x}}(0)=\Gamma_{V,\bar{x}}(1)=\bar{x}$, $\Gamma_{V,\bar{x}}(\pi/2)=x$, and $\dot\Gamma_{V,\bar{x}}(0)=V$. As well, for every $t\in[0,1]$, we have that $\Gamma_{V,\bar{x}}(t)\in S^{k-(k-2)}[x_1,...,x_{k-1}]$.
	
	Like in Example~\ref{Sk without half equator}, we shall now consider the following region in $S^{k+1}$
	\begin{equation}\label{eq for R in proof of main theorem}
	\mathcal{R}\overset{\text{def}}{=}\displaystyle\bigcup_{t\in [0,1]}\left( \bigslant{\partial B\left(\Gamma_{V,\bar{x}}(t),\frac{\pi}{2}-\frac{\epsilon}{2}\right)}{\sim}\right)
	\end{equation}
	where $\epsilon>0$ is given such that $d(S^{k-1},C)<\epsilon$.
	
	By definition, $\mathcal{R}$ given by Equation~\ref{eq for R in proof of main theorem} is the sweepout of a 1-parameter family of convex hypersurfaces satisfying condition $(i)$ in Corollary~\ref{corollary1}. Moreover, since $C$ is an open set containing a codimension two equatorial sphere, $S^{k+1}\setminus \partial C$ is the disconnected union of two open sets, each of them connected.
	
	Now, one can easily see that
	\begin{equation*}
	\mathcal{R} = S^{k+1}\setminus \left(S^{k-1}\right)_{\epsilon}\supset S^{k+1}\setminus C,
	\end{equation*} 
	and since $\epsilon$ is chosen so that $\partial C \cap (S^{k-1}_{\epsilon})=\emptyset$, it follows that $\mathcal{R}\setminus\partial C$ is a region satisfying conditions $(i)$ and $(ii)$ of Corollary~\ref{corollary1}, proving there exist no non-constant harmonic map defined on compact Riemannian manifolds with image in the complement of $C$.
\end{proof}

\begin{rmk}[A quasi-counterexample]
        The Clifford torus in $S^3$ decomposes the sphere into two regions that admit mean convex sweepouts, for instance the equidistant one. By Sampson's maximum principle, there cannot exist other minimal hypersurfaces in one of these two regions. In fact, it is known more generally that every two minimal hypersurfaces intersect on manifolds with positive Ricci curvature. On the other hand, these two regions have several closed geodesics; or in other words, many codimension 2 totally geodesic spheres. Those geodesic $S^1$ are not homotopic to a point inside the region containing them.
\end{rmk}

\section{Solomon's Bernstein theorem}

\subsection{Codimension 1}

B. Solomon \cite{solomon84} has proven the following theorem.

\begin{thm}[Solomon]\label{solomon}
        Let $M^{k}\subset S^{k+1}$ be a smooth, compact minimal hypersurface. If $H^1(M)=0$ and the Gauss image of $M$is contained on a thickening of a great codimension two sphere $S^{k-1}$, $\gamma(M)\subset S^{k+1}\setminus \left(S^{k-1}\right)_{\epsilon}$, then $M$ is a totally geodesic hypersurface in $S^{k+1}$.
\end{thm}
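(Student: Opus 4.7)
The Gauss map of $M$, namely $\gamma:M\to S^{k+1}$ sending $x$ to its unit normal $\nu(x)\in S^{k+1}\subset\R^{k+2}$ (parallel-translated to the origin), is harmonic by Ruh--Vilms, and by hypothesis $\gamma(M)\subset S^{k+1}\setminus (S^{k-1})_\epsilon$. My plan is to enlarge the forbidden region to an open, connected set $C\supset S^{k-1}$ in which the equator $S^{k-1}$ is null-homotopic, so that Theorem~\ref{Maximal sets in $S^k$} applies to $\gamma:M\to S^{k+1}\setminus C$ and forces $\gamma$ to be constant. The naive choice $C=(S^{k-1})_\epsilon$ fails since $S^{k-1}$ is not contractible in its own tubular neighborhood, so the role of the hypothesis $H^1(M)=0$ will be precisely to supply the missing room.

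Place $S^{k-1}=\{(x,0):x\in S^{k-1}\}$ as the standard equator in $S^{k+1}\subset \R^k\oplus\R^2$. Then $S^{k+1}\setminus S^{k-1}$ deformation retracts onto the orthogonal $S^1\subset\R^2$ via
\[
r:S^{k+1}\setminus S^{k-1}\longrightarrow S^1,\qquad r(x,y)=y/|y|,
\]
which is a homotopy equivalence. First I form $r\circ\gamma:M\to S^1$. The classical identification $[M,S^1]\cong H^1(M;\Z)$ combined with $H^1(M)=0$ shows that $r\circ\gamma$ is null-homotopic, hence lifts to a continuous map $\widetilde{r\circ\gamma}:M\to\R$. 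Compactness of $M$ forces this lift to have compact image in $\R$, so some angle $\theta_0\in S^1$ is omitted by $r\circ\gamma$ altogether. Consequently $\gamma(M)$ avoids the whole meridian $H_{\theta_0}:=r^{-1}(\theta_0)$, an open $k$-hemisphere whose closure $\overline{H_{\theta_0}}$ is a closed $k$-disk with boundary exactly $S^{k-1}$.

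Because $\gamma(M)$ also misses $S^{k-1}$, the compact sets $\gamma(M)$ and $\overline{H_{\theta_0}}$ are disjoint, so I can pick $\delta>0$ small enough that $\gamma(M)$ still avoids
\[
C:=(S^{k-1})_\epsilon\cup (\overline{H_{\theta_0}})_\delta.
\]
For $\epsilon,\delta$ sufficiently small $C$ is an open, connected neighborhood of the equatorial $S^{k-1}$ that deformation retracts onto the contractible disk $\overline{H_{\theta_0}}$; in particular $S^{k-1}$ is null-homotopic in $C$. Theorem~\ref{Maximal sets in $S^k$} then forces the harmonic map $\gamma:M\to S^{k+1}\setminus C$ to be constant. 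A constant unit normal means $d\nu\equiv 0$, so the shape operator of $M$ in $S^{k+1}$ vanishes identically and $M$ is totally geodesic in $S^{k+1}$.

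I expect the delicate step to be the enlargement: translating the cohomological assumption $H^1(M)=0$, via $[M,S^1]\cong H^1(M;\Z)$, into the geometric statement that $\gamma(M)$ misses a full meridian $k$-disk. Once this is in hand, the hypotheses of Theorem~\ref{Maximal sets in $S^k$} are directly verified and the Bernstein-type conclusion follows at once from the standard Weingarten relation between the Gauss map and the shape operator.
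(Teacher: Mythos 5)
Your overall architecture --- Ruh--Vilms, the retraction $r(x,y)=y/|y|$ onto the orthogonal circle, and a final appeal to Theorem~\ref{Maximal sets in $S^k$} --- is reasonable, but there is a genuine gap at the pivotal step. From the null-homotopy of $r\circ\gamma$ and the compactness of the image of the lift $\widetilde{r\circ\gamma}\colon M\to\R$ you conclude that some angle $\theta_0\in S^1$ is omitted by $r\circ\gamma$. That does not follow: a compact interval $[a,b]\subset\R$ of length at least $2\pi$ projects onto all of $S^1$, so a null-homotopic map from a compact manifold to $S^1$ can perfectly well be surjective (for instance $(x,y,z)\mapsto e^{4\pi i z}$ on $S^2$). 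Consequently you have not shown that $\gamma(M)$ avoids a meridian disk $\overline{H_{\theta_0}}$, and without an omitted disk bounded by the equator you cannot manufacture the open connected set $C\supset S^{k-1}$ in which $S^{k-1}$ is null-homotopic; as you yourself note, $C=(S^{k-1})_\epsilon$ alone does not qualify. The rest of the argument (the enlargement $C$, the application of Theorem~\ref{Maximal sets in $S^k$}, and the Weingarten step) is fine, but it all hangs on this unproved claim.

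The paper uses the hypothesis $H^1(M)=0$ differently, in a way that sidesteps this issue: since the induced map on $\pi_1$ is trivial, the harmonic map $\gamma$ itself lifts to a harmonic map $\tilde\gamma\colon\tilde M\to\widetilde{(S^{k+1}\setminus S^{k-1})}$ into the universal cover (an infinite strip), where its image is compact, and the convex-sweepout/maximum-principle machinery of Corollary~\ref{corollary1} is applied upstairs. The point is that the $S^1$-parametrized family of convex leaves from Example~\ref{Sk without half equator} unwinds in the cover into an $\R$-parametrized family, so removing a single leaf disconnects the region containing the compact image --- exactly the separation property (ii) that fails in the base and that your omitted-meridian claim was meant to restore. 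If you replace the omitted-angle step by this lifting argument (or supply an independent proof that $\gamma(M)$ misses a full meridian), your write-up can be completed.
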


We present a simple proof for this result as an application of Theorem~\ref{corollary of SMP}.

\begin{proof}
        Since $M$ is minimal, its Gauss map $\gamma:M\longrightarrow S^{k+1}$ is a harmonic map. Let $\tilde{M}$ be the universal cover of $M$, and denote by $\widetilde{(S^{k+1}\setminus S^{k-1})}$ the universal cover of $S^{k+1}\setminus S^{k-1}$. Let $\psi_1:\tilde{M}\longrightarrow M$ and $\psi_2:\widetilde{(S^{k+1}\setminus S^{k-1})}\longrightarrow (S^{k+1}\setminus S^{k-1})$ be the isometries given by the respective covering maps. Since $S^{k-1}$ has codimension 2 in $S^{k+1}$, its complement is not simply connected; in fact, $\widetilde{(S^{k+1}\setminus S^{k-1})}$ is an infinite strip in $R^{k+1}$ with metric $\psi_2^{*}(\mathring{\gamma})$, that is, the pull-back of the round metric $\mathring{\gamma}$ by the covering map.

        Since $H^1(M)=0$, we have that $\Pi_1(\gamma\circ \psi_1)=0$, where $\gamma\circ \psi_1: \tilde{M}\longrightarrow S^{k+1}$. This implies that the Gauss map $\gamma$ lifts to the universal cover as a harmonic map $\tilde{\gamma}:\tilde{M}\longrightarrow \widetilde{(S^{k+1}\setminus S^{k-1})}$, and $\tilde{\gamma}(\tilde{M})$ is a compact subset of $\widetilde{(S^{k+1}\setminus S^{k-1})}$. But the latter admits a sweepout by convex hypersurfaces, given by the lifting of the region in $S^{k-1}\setminus (S^{k-1})_{\epsilon}$ given by equation~\eqref{definition of R^j}. The theorem now follows from Corollary~\ref{corollary1}.
\end{proof}

\subsection{Codimension 2}

\subsubsection{A fast visit to the Grassmannian world}

To study the case of codimension 2, we need to understand the differential geometry of Grassmannian manifolds. We follow D. Hoffman and R. Osserman \cite{hoffman80}, S. Kozlov\cite{kozlov97}, and the work of the second author with Y. Xin \cite{jost99}.

The oriented Grassmanian $G^+_{2,k+2}$ has a natural orientation induced from a complex structure that can be defined as follows. Given an oriented 2-plane $P$ in $\R^{k+2}$, let $v,w$ be orthonormal vectors such that $P=\text{span}\{\langle v,w\rangle\}$. Define
\begin{equation*}
z=v +iw,
\end{equation*}
and note that this complex vector assigns a point of $\C^{k+2}$ to $P$. If one rotates $v$ and $w$ in $P$ by an angle $\theta$, we assign the complex vector $e^{i\theta}$ to $P$. Therefore, each oriented 2-plane $P$ is assigned to a unique point in the complex projective space $\C P^{k+1}$. Since $v$ and $w$ are orthonormal, it follows that
\begin{equation}\label{eq of Q_k}
\sum_{j=1}^{k+2}z^2_j=0,
\end{equation} 
where $z_j=v_j +iw_j$ for every $j\in\{1,...,k+2\}$. The above equation defines a quadric $Q_k\subset \C P^{k+1}$.

\noindent Moreover, if
\begin{equation}
ds^2= 2\frac{\sum_{j<l}|z_jdz_l-z_ldz_j|^2}{[\sum_{j=1}^{k+2}|z_j|^2]^2}
\end{equation}
is the Fubini-Study metric on $\C P^{k+1}$, and $(Q_k,\text{d}s^2|_{Q_k})$ is the pullback to $Q_k$ of the Fubini-Study metric with respect to the canonical embedding of $Q_k$ in $\C P^{k+1}$, we obtain an isometry between $G^+_{2,k+2}$ endowed with the metric of a homogeneous space and $(Q_k,\text{d}s^2|_{Q_k})$.

\begin{thm}[Hoffman-Osserman]\label{hoffman-osserman}
        Let $H$ be the hyperplane in $\C P^{k+1}$ given by $H: z_1 -iz_2=0$, and 
        \begin{equation*}
        Q_k^{*}\overset{\text{def}}{=} Q_k\setminus H .
        \end{equation*}
        There exists a biholomorphic map $\varphi: Q_k^{*}\longrightarrow \C^k$ given by
        \begin{equation}
        (z_1,...,z_{k+2})=\frac{\left(z_1 - iz_2\right)}{2}\left(1-\sum_{j=1}^{k}\xi^2_j, i\left(1+\sum_{j=1}^{k}\xi^2_j\right),2\xi_1,...,2\xi_k\right)
        \end{equation}
        where
        \begin{equation}
        \xi_1=\frac{z_3}{z_1 - iz_2},...,\xi_k=\frac{z_{}k+2}{z_1 - iz_2}
        \end{equation}
\end{thm}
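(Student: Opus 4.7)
The plan is to exhibit $\varphi$ as a biholomorphism by constructing an explicit inverse and then checking that the two maps compose to the identity. The underlying idea is just the standard affine chart trick for quadrics: removing a hyperplane section from a smooth quadric produces an affine variety, and in this case the quadric equation \eqref{eq of Q_k} can be solved explicitly for the remaining coordinates once $z_1 - iz_2$ is nonzero.

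First, I would observe that on $Q_k^{*}$ the linear form $z_1 - iz_2$ does not vanish by definition, so the functions
\[
\xi_j := \frac{z_{j+2}}{z_1 - iz_2}, \qquad j = 1, \ldots, k,
\]
are well-defined holomorphic functions on $Q_k^{*}$ (they are quotients of holomorphic sections of $\mathcal{O}(1)$, invariant under rescaling the homogeneous coordinates). This defines a holomorphic map $\Phi: Q_k^{*} \to \mathbb{C}^k$, $[z] \mapsto (\xi_1, \ldots, \xi_k)$, which will be the candidate for $\varphi$ (up to the explicit reconstruction formula stated in the theorem).

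Next, I would construct the inverse. Given $(\xi_1, \ldots, \xi_k) \in \mathbb{C}^k$, one normalizes the homogeneous coordinates by setting $w := z_1 - iz_2$ as a free scaling parameter, and uses the quadric equation to recover $z_1 + iz_2$: from $(z_1 - iz_2)(z_1 + iz_2) = z_1^2 + z_2^2 = -\sum_{j=1}^k z_{j+2}^2$ and $z_{j+2} = w \xi_j$, one gets
\[
z_1 + iz_2 = -w \sum_{j=1}^k \xi_j^2.
\]
Solving the two linear equations for $z_1, z_2$ yields exactly the formula stated in the theorem, i.e.\ the map
\[
\Psi(\xi_1,\ldots,\xi_k) = \Bigl[\tfrac{w}{2}\bigl(1 - \textstyle\sum \xi_j^2\bigr), \; \tfrac{iw}{2}\bigl(1 + \textstyle\sum \xi_j^2\bigr), \; w\xi_1, \ldots, w\xi_k\Bigr],
\]
which is holomorphic and independent of the choice of $w \neq 0$ (as a point in $\mathbb{C}P^{k+1}$).

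The remaining steps are then purely algebraic verifications: (i) $\Psi(\xi) \in Q_k$ by a direct computation using $(1-\sum \xi_j^2)^2 + (i)^2(1+\sum\xi_j^2)^2 + \sum(2\xi_j)^2 = 0$; (ii) $\Psi(\xi) \notin H$ since its first coordinate minus $i$ times its second equals $w \neq 0$; (iii) $\Phi\circ\Psi = \mathrm{id}_{\mathbb{C}^k}$ and $\Psi\circ\Phi = \mathrm{id}_{Q_k^{*}}$ by substitution. I do not expect a serious obstacle; the only thing requiring care is keeping the projective ambiguity straight, that is, making sure every formula is homogeneous of the correct degree and that the normalization of $w$ never enters the final projective point. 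Once these bookkeeping points are handled, holomorphicity of $\Phi$ and $\Psi$ is immediate from their rational form, and the theorem follows.
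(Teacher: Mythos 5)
Your proposal is correct and complete. Note that the paper itself offers no proof of this statement --- it simply defers to Hoffman--Osserman \cite{hoffman80} --- so there is nothing to compare against except the original reference, and your argument is precisely the standard one given there: the affine-chart parametrization of the smooth quadric obtained by solving $(z_1-iz_2)(z_1+iz_2)=-\sum_{j=1}^{k}z_{j+2}^2$ for $z_1+iz_2$ on the locus $z_1-iz_2\neq 0$. All three verifications check out (in particular $(1-S)^2-(1+S)^2+4S=0$ with $S=\sum\xi_j^2$, and $z_1-iz_2=w$ on the image of $\Psi$), the projective bookkeeping is handled correctly since $\Psi$ is homogeneous of degree $1$ in the scaling parameter $w$, and mutual inversion of the two rational maps gives biholomorphy. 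One could shorten the write-up slightly by observing from the outset that $\Phi$ and $\Psi$ are rational maps defined everywhere on their respective domains, so only the set-theoretic bijection needs checking, but this is a matter of taste rather than substance.
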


A proof can be found in D. Hoffman and R. Osserman \cite{hoffman80}.

Consider another hyperplane $H^{\prime}:z_1+iz_2=0$ in $\C P^{k+1}$. Obviously, $Q_k\cap(H\cup H^{\prime})=\{((z_1,...,z_{k+2}))\in Q_k\,|\,\, z^2_1+z^2_2=0\}$ has codimension 2, and therefore
\begin{equation*}
Q_k\setminus (H^{\prime}\cup H)=\C^k\setminus \varphi(H^{\prime})
\end{equation*}
is not simply connected.

For the case $k=2$, we have $Q_2=G^+_{2,4}$ and
\begin{equation}\label{Q_2 without two hyperplanes}
Q_2\setminus (H^{\prime}\cup H)=S^2\times S^2\setminus \left[\left(S^0\times S^2\right)\cup\left(S^2\times S^0\right)\right].
\end{equation}

To find convex sets in the general quadric $Q_k$, we need to introduce some notation and present results by S. Kozlov, and the second author with Y. Xin \cite{kozlov97}, \cite{jost99}.

Let $\psi: G^+_{p,n} \longrightarrow \Lambda_p(\R^n)$ denote the Pl\"ucker embedding of the oriented Grassmannian into the Grassmann algebra. We identify $G^+_{p,n} \simeq \psi(G^+_{p,n})$ without further comments, whenever it will not cause misunderstandings.

Remember that $G^+_{p,n} = \psi(G^+_{p,n}) = K_p \cap S^{\binom{n}{p}-1}\subset \R^{\binom{n}{p}}\simeq \Lambda_{p}(\R^n)$, where $K_p$ is the cone of simple $p$-vectors in $\Lambda_p(\R^n)$.

For a point $w \in G^+_{p,n}$, we take orthonormal basis $\{e_i\}_{i=1}^{p}$, $\{n_\alpha\}_{\alpha=1}^{q}$, for $w$ and $w^{\perp}$, respectively. We will denote by $\{\eta_{i\alpha}\}_{i=1,...,p}^{\alpha=1,...,q}$ the following system of vectors. 
\begin{equation}
\eta_{i\alpha} = e_1\wedge \cdot\cdot\cdot\wedge e_{i-1}\wedge n_\alpha \wedge e_{i+1}\wedge \cdot\cdot\cdot \wedge e_p.
\end{equation}

It is easy to check that such a system forms a basis for the tangent space $T_{w}G^{+}_{p,n}$. We denote an element of the tangent bundle $TG^{+}_{p,n}$ by $(w,X)$. In terms of the basis vectors, one has the following

\begin{thm}[Kozlov \cite{kozlov97}]
        Let $(w,X)\in TG^{+}_{p,n}$, with $X\neq 0$. Then there exists an integer $r\in\{1,...,\min\{p,q\}\}$, an orthonormal basis $\{e_{i}\}_{i=1}^{p}$ for $w\in G^+_{p,n}$, and a system $\{m_{i}\}_{i=1}^{r}$ of non-zero pairwise orthogonal vectors in $span\langle\{n_\alpha\}\rangle$, such that
        \begin{align}
        w =& e_{1}\wedge...\wedge e_{p}, \label{local expression of w}\\
        X =& \left(m_{1}\wedge e_{2}\wedge...\wedge e_{r}+...+e_{1}\wedge...\wedge e_{r-1}\wedge m_{r}\right)\wedge\left(e_{r+1}\wedge...\wedge e_{p}\right).\label{local expression of X}
        \end{align}
\end{thm}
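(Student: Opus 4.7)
The plan is to reduce this structure theorem to the singular value decomposition (SVD) of real matrices, via the standard identification $T_w G^+_{p,n} \cong \mathrm{Hom}(w,w^\perp)$. Fixing orthonormal frames $\{e_i\}_{i=1}^p$ for $w$ and $\{n_\alpha\}_{\alpha=1}^q$ for $w^\perp$, any $X \in T_w G^+_{p,n}$ has a unique expansion $X = \sum_{i,\alpha} a_{i\alpha}\, \eta_{i\alpha}$, and is thus encoded by the $p \times q$ matrix $A = (a_{i\alpha})$ representing the linear map $L_X : w \to w^\perp$ given by $L_X(e_i) = \sum_\alpha a_{i\alpha} n_\alpha$. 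A direct computation shows that changing frames by $\tilde e_i = \sum_j U_{ij} e_j$ and $\tilde n_\alpha = \sum_\beta V_{\alpha\beta} n_\beta$ with $U \in O(p)$, $V \in O(q)$ transforms the matrix as $A \mapsto U A V^T$, while $w = e_1 \wedge \cdots \wedge e_p$ is preserved as an oriented element precisely when $\det U = +1$.

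Next I would apply SVD to pick $U \in O(p)$ and $V \in O(q)$ such that $U A V^T$ is ``diagonal'' with entries $\sigma_1,\ldots,\sigma_r > 0$ in the positions $(i,i)$ for $i = 1,\ldots,r := \mathrm{rank}(A) \leq \min\{p,q\}$ and zero elsewhere. If either determinant equals $-1$, I would negate the last column of the offending matrix, which preserves the diagonal shape but possibly flips the sign of one $\sigma_i$. This is harmless because the vectors $m_i$ built below need only be non-zero, not of any definite sign. After renaming the transformed frames $\{e_i\}$ and $\{n_\alpha\}$, the tangent vector takes the form
\begin{equation*}
X = \sum_{i=1}^r \sigma_i\, e_1 \wedge \cdots \wedge e_{i-1} \wedge n_i \wedge e_{i+1} \wedge \cdots \wedge e_p.
\end{equation*}

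Setting $m_i := \sigma_i n_i \in \mathrm{span}\langle\{n_\alpha\}\rangle$, the system $\{m_i\}_{i=1}^r$ is non-zero (since $\sigma_i \neq 0$) and pairwise orthogonal (as scalar multiples of the orthonormal set $\{n_i\}$). Each summand carries the common factor $e_{r+1} \wedge \cdots \wedge e_p$, so factoring it out on the right yields exactly the claimed decomposition of $X$. The only genuinely subtle point is the $SO$-vs-$O$ bookkeeping needed to respect the orientation of $w$, which is handled by the sign adjustments above; the substance of the theorem is pure SVD, and no Grassmannian geometry beyond the tangent space identification enters.
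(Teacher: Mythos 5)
The paper does not prove this statement at all; it is quoted from Kozlov \cite{kozlov97} and used as a black box, so there is no internal proof to compare against. Your argument --- identifying $T_wG^{+}_{p,n}\cong\Hom(w,w^{\perp})$, encoding $X$ as a $p\times q$ matrix $A$ with $A\mapsto UAV^{T}$ under frame changes, and applying the singular value decomposition --- is the standard proof of this normal form and is essentially correct. The only bookkeeping slip is in the orientation fix: to keep $UAV^{T}$ diagonal you should replace $U$ by $\mathrm{diag}(1,\dots,1,-1)\,U$ (i.e.\ negate one of the new basis vectors $\tilde e_i$, a row of $U$ in your convention, not a column), and note that $\det V$ is irrelevant since the statement imposes no orientation on the frame of $w^{\perp}$; neither point affects the validity of the argument, as the resulting sign is absorbed into the $m_i$.
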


Writing $m_\alpha = \lambda^\alpha n_\alpha$, we have that $\lambda^\alpha=|m_\alpha|$. With this notation, we may compute
\begin{equation}\label{equation for X}
X=\left(\lambda^{1}n_{1}\wedge e_{2}\wedge...\wedge e_{r} +...+ \lambda^{r}e_{1}\wedge ... \wedge e_{r-1}\wedge n_{r}\right)\wedge X_{0},
\end{equation}
obtaining $\left|X\right|^2=\left(\sum_{\alpha=1}^{r}\left|\lambda^{\alpha}\right|^{2}\right)$.

For a brief geometrical description of geodesics in $G^+_{p,n}$, see S. Kozlov \cite{kozlov97} and for a  more detailed treatment see  our previous work \cite{assimos2018}. The geodesics are obtained simply by rotation of tangent basis vectors into normal ones; we consider 
\begin{align}
& e_i(s)\overset{\text{def}}{=} e_i\cos(s) + n_i\sin(s)\\
& n_i(s)\overset{\text{def}}{=} -e_i\sin(s) + n_i\cos(s)
\end{align}

Then a geodesic passing through $w=e_1\wedge\cdot\cdot\cdot\wedge e_p \in G^+_{p,n}$ with tangent vector $X=\left(\lambda^1n_1\wedge e_2\wedge\cdot\cdot\cdot\wedge e_r + ... + \lambda^r e_1\wedge\cdot\cdot\cdot \wedge e_{r-1}\wedge n_r\right)\wedge\left(e_{r+1}\wedge\cdot\cdot\cdot\wedge e_p\right)\in T_wG^+_{p,n}$, can be written as
\begin{equation}
w_{X}(t)\overset{\text{def}}{=} \left(e_1(\lambda^1 t)\wedge \cdot\cdot\cdot \wedge e_r(\lambda^r t)\right)\wedge\left(e_{r+1}\wedge\cdot\cdot\cdot\wedge e_p\right)
\end{equation}

Now, let us assume that $X\in T_w G^+_{p,n}$ is always a unit speed vector, i.e., 
\begin{equation*}
\left|X\right|^2=\left(\sum_{\alpha=1}^{r}\left|\lambda^{\alpha}\right|^{2}\right)=1.
\end{equation*}

Denoting by $\lambda_{\alpha^{\prime}}\overset{\text{def}}{=} \underset{\alpha}{\max}\{\lambda^{\alpha}\}$ and $\lambda_{\beta^{\prime}}\overset{\text{def}}{=} \underset{\alpha\neq \alpha}{\max}\{\lambda^{\alpha}\}$, we define the following number.
\begin{equation}
t_{X}\overset{\text{def}}{=}\frac{\pi}{2\left(\left|\lambda_{\alpha^{\prime}}\right| + \left|\lambda_{\beta^{\prime}}\right|\right)},
\end{equation}

\begin{thm}[Jost-Xin]\label{Jost-Xin}
        Let $(w,X)\in T G^{+}_{p,n}$, where $X$ is a unit tangent vector as in Equation~\eqref{equation for X}. The set defined by
        $$
        B_{G}(w)\overset{\text{def}}{=}\{w_X(t)\in G^{+}_{p,n}; 0\leq t\leq t_{X}\}.
        $$
        is a convex set (in the geodesic sense) and contains the largest geodesic ball centered at $w$.
\end{thm}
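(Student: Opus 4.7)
The plan is to reinterpret $B_G(w)$, which I understand as the union over unit directions $X \in T_wG^+_{p,n}$ of the geodesic segments $\{w_X(t): 0\le t\le t_X\}$, as a sublevel set of an invariant function. Given the normal form in Equations~\eqref{local expression of w}--\eqref{local expression of X}, the geodesic $w_X(t)$ reaches a point $v$ whose Jordan (principal) angles with $w$ are exactly $\lambda^\alpha t$ for $\alpha=1,\dots,r$ and zero otherwise. Hence the upper bound $t\le t_X=\pi/[2(|\lambda_{\alpha'}|+|\lambda_{\beta'}|)]$ is equivalent to $\theta_{1}(w,v)+\theta_{2}(w,v)\le \pi/2$, where $\theta_1\ge \theta_2\ge\cdots$ are the Jordan angles. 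Thus $B_G(w)=\{v\in G^+_{p,n}\,:\,\theta_1(w,v)+\theta_2(w,v)\le \pi/2\}$, an intrinsic characterization independent of the direction $X$ used to parameterize $v$.

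Next I would introduce the function $f(v):=\cos\bigl(\theta_1(w,v)+\theta_2(w,v)\bigr)$, which is strictly positive on the interior of $B_G(w)$, vanishes on its boundary, and is negative outside. Convexity follows once $f$ is shown to be geodesically concave on $B_G(w)$: given $p,q\in B_G(w)$ and the minimizing geodesic $\sigma(s)$ joining them, concavity together with nonnegativity at the endpoints forces $f(\sigma(s))\ge 0$ throughout, so $\sigma\subset B_G(w)$.

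The concavity of $f$ would be verified by computing its Hessian along arbitrary Grassmannian geodesics using the explicit rotations $e_i(s)$, $n_\alpha(s)$ recalled in the excerpt. In an adapted basis realizing the Jordan decomposition at $v$, a unit tangent direction $Y\in T_v G^+_{p,n}$ decomposes into (i) components rotating in the planes $\mathrm{span}(e_\alpha,n_\alpha)$, which change $\theta_\alpha$ at unit rate and for which the concavity of $\cos$ gives the right sign; and (ii) "mixing" components that alter the Jordan decomposition and whose second-order effect on $\theta_1+\theta_2$ can be controlled using Cauchy--Schwarz combined with the defining constraint $\theta_1+\theta_2\le \pi/2$ (which keeps $\cos(\theta_1+\theta_2)\ge 0$, so signs work out). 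An equivalent, more conceptual rephrasing: at $v$ the "first-derivative" bound $|\partial_Y(\theta_1+\theta_2)|\le |Y|$ is precisely the extremization characterizing $t_X$, and its sharpness is what forces the Hessian inequality.

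For the second assertion, let $r_0:=\inf_{|X|=1} t_X$. The explicit formula for $t_X$ shows that $r_0$ is realized by a direction $X_0$ with $\lambda_{\alpha'}=\lambda_{\beta'}=1/\sqrt{2}$ and all other $\lambda$'s zero, and the metric ball $B(w,r_0)$ lies inside $B_G(w)$ by direct comparison of Jordan angles along radial geodesics. Any strictly larger geodesic ball $B(w,r)$ would contain a boundary point of $B_G(w)$ in the direction $X_0$, and the geodesic continuation past this point leaves $B_G(w)$; since $B_G(w)$ has been shown convex, such a ball cannot itself be convex, proving $B_G(w)$ contains the largest geodesic ball around $w$. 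The main obstacle is the concavity step at points where the top two Jordan angles coincide: the $\theta_i$ are only Lipschitz in $v$, so one must either restrict to the open dense stratum where $\theta_1>\theta_2>\theta_3$ and extend by continuity, or work with a smooth symmetric proxy for $f$ (built from elementary symmetric polynomials in $\cos^2\theta_\alpha$) that agrees with $f$ on the degeneracy locus.
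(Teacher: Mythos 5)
The paper offers no proof of this theorem: it is imported verbatim from Jost--Xin \cite{jost99}, so there is nothing internal to compare against. Judged on its own, your reformulation is the right one and matches how the result is actually established in the literature: reading off the Jordan angles $\lambda^\alpha t$ along $w_X(t)$, the condition $t\le t_X$ is exactly $\theta_1(w,v)+\theta_2(w,v)\le\pi/2$, so $B_G(w)$ is the set where the sum of the two largest Jordan angles is at most $\pi/2$, and the containment of the ball of radius $\pi/(2\sqrt2)$ follows from minimizing $t_X$ over unit $X$. But the entire analytic content of the theorem is the convexity statement, and that is precisely the step you do not carry out. Asserting that the Hessian of $\cos(\theta_1+\theta_2)$ has the right sign because the angle-rotating components behave like $\cos$ of a distance and the ``mixing'' components ``can be controlled using Cauchy--Schwarz'' is a plan, not a proof; the mixing terms are where the curvature of the Grassmannian enters and where the constant $\pi/2$ (rather than some smaller bound) has to be earned. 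Your proposed shortcut, the bound $|\partial_Y(\theta_1+\theta_2)|\le|Y|$, is in fact false: for the direction $Y=\tfrac{1}{\sqrt2}\left(n_1\wedge e_2+e_1\wedge n_2\right)\wedge e_3\wedge\cdots\wedge e_p$ both angles grow at rate $1/\sqrt2$, so $\theta_1+\theta_2$ changes at rate $\sqrt2$; this is exactly why $\inf_X t_X=\pi/(2\sqrt2)$, and it undercuts the ``conceptual rephrasing'' you offer in place of the Hessian computation. You also correctly flag, but do not resolve, the non-smoothness of $\theta_1,\theta_2$ on the locus where Jordan angles coincide.

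The final step for the ``largest geodesic ball'' assertion contains a non sequitur: from ``$B(w,r)$ with $r>\pi/(2\sqrt2)$ contains a point outside $B_G(w)$'' and ``$B_G(w)$ is convex'' it does not follow that $B(w,r)$ is non-convex --- a larger convex set may well contain points outside a smaller one. To show that $\pi/(2\sqrt2)$ is the convexity radius one must exhibit, for each $r>\pi/(2\sqrt2)$, two points of $B(w,r)$ whose minimizing geodesic leaves $B(w,r)$ (the standard construction uses the endpoints of the two single-angle rotations $e_1\mapsto n_1$ and $e_2\mapsto n_2$ near angle $\pi/2$). As it stands the proposal identifies the correct geometric picture but leaves both halves of the theorem unproved.
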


\subsubsection{The main theorem}\label{main}

We start giving a proof for a much simpler case as a geometric motivation. Let $M^2\subset S^4$ be a codimension 2 compact minimal immersed submanifold such that $H^1(M)=0$. If the Gauss map $\gamma:M\longrightarrow Q_2$ omits two hyperplanes as in Equation~\ref{Q_2 without two hyperplanes}, then $\gamma$ is constant and $M$ a totally geodesic 2-sphere in $S^4$.

To prove this, define
\begin{equation*}
\mathcal{R}\overset{\text{def}}{=}\displaystyle\bigcup_{t,s\in S^1} \partial B\left(\Gamma(t),\frac{\pi}{2}-\frac{\epsilon}{2}\right)\times \partial B\left(\Gamma(s),\frac{\pi}{2}-\frac{\epsilon}{2}\right),
\end{equation*}
where $\partial B\left(\Gamma(t),\frac{\pi}{2}-\frac{\epsilon}{2}\right)$ is given in Example~\ref{S2 without half equator} and the claim follows from Theorem~\ref{corollary of SMP}

Since we assume $H^1(M)=0$, we are basically assuming that $M$ is topologically $S^2$. But the harmonic Gauss map $\gamma:S^2\longrightarrow S^2\times S^2$ must omit a large set in $S^2\times S^2$, therefore the question of when a minimal immersion of $S^2$ into $S^4$ is totally geodesic is reduced to the question of finding two antipodal points in the image of the Gauss map, one in each of the $S^2$-components of the Grassmannian $G^+_{2,4}$.

We shall now state the main theorem. For that, we set $n=k+2$ in the above, and define
\begin{equation}\label{the main region in the grassmannian}
\mathcal{R}\overset{\text{def}}{=}\displaystyle\big(\displaystyle\bigcup_{t\in S^1}\partial B_{G}(w_{X_{1}}(t))\big).
\end{equation}
where $X_{1}=n_{1}\wedge e_{2}\wedge e_{3}\wedge...\wedge e_{p}$.

This region gives a sweepout by convex hypersurfaces of a set that contains none of the points $w_{X_{2}}(\pm t_{X_2})$, where $X_2$ is the unit tangent vector at $w$ given by the rotation of two vectors $e_i$ and $e_j$ to normal directions $n_\alpha$, $n_\beta$ (for example, $X_{2}=\left(\frac{\sqrt{2}}{2}n_{1}\wedge e_{2} + \frac{\sqrt{2}}{2}e_{1}\wedge n_{2}\right)\wedge e_{3}\wedge\cdot\cdot\cdot\wedge e_{p}$). Now, if we take the biholomorphism $\varphi$ of Theorem~\ref{hoffman-osserman}, then for an appropriate basis and two different vectors $X_{2}$ and $\tilde{X}_{2}$, that meaning two different rotations of basis vectors, $\varphi^{-1}(w_{X_{2}})\overset{\text{def}}{=} H$ and $\varphi^{-1}(w_{X_{2}})\overset{\text{def}}{=}H^{\prime}$. This construction gives us the following theorem, which can be seen as a codimension 2 version of Solomon's theorem.

\begin{thm}\label{mainthm}
        Let $M^k$ be a codimension 2 compact minimal submanifold of $S^{k+2}$ with $H^1(M)=0$. Suppose that its Gauss image is contained in the region $\mathcal{R}$ given by \eqref{the main region in the grassmannian}. Then $g$ is constant and $M$ a totally geodesic submanifold of $S^{k+2}$.   
\end{thm}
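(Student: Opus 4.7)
The plan is to follow the template of Solomon's codimension~1 argument, transplanted from $S^{k+1}$ to the Grassmannian quadric $Q_k\cong G^+_{2,k+2}$ and exploiting the complex geometry of Theorem~\ref{hoffman-osserman}. By the Ruh--Vilms theorem, the Gauss map $\gamma\colon M\to G^+_{2,k+2}$ is harmonic, so it suffices to force $\gamma$ to be constant; constancy then forces $M$ to be a totally geodesic $S^k$ in $S^{k+2}$. The crucial structural observation I would begin with is that the sweepout $\mathcal{R}=\bigcup_{t\in S^1}\partial B_G(w_{X_1}(t))$ exhausts exactly the complement of a real-codimension~$2$ subset $\Sigma\subset G^+_{2,k+2}$: as in the motivating identification \eqref{Q_2 without two hyperplanes} for $k=2$ and the discussion just before the theorem statement, $\Sigma$ corresponds under the Hoffman--Osserman biholomorphism of Theorem~\ref{hoffman-osserman} to the intersection of $Q_k$ with a union of two hyperplanes $H\cup H'\subset \mathbb{C}P^{k+1}$, detected by the two extremal geodesic endpoints $w_{X_2}(\pm t_{X_2})$ missed by every leaf of the sweepout.

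Next I would compute $\pi_1(\mathcal{R})$ using Theorem~\ref{hoffman-osserman}. Removing $H\cap Q_k$ first gives $Q_k\setminus (H\cap Q_k)\cong \mathbb{C}^k$, and the image $\varphi(H'\cap Q_k)$ is a complex hyperplane therein; thus
\begin{equation*}
Q_k\setminus \Sigma \;\cong\; \mathbb{C}^k\setminus \mathbb{C}^{k-1}\;\simeq\;\mathbb{C}^{k-1}\times \mathbb{C}^{\ast},
\end{equation*}
so $\pi_1(\mathcal{R})\cong \mathbb{Z}$, which is abelian and torsion-free. Because $H^1(M;\mathbb{Z})=0$, universal coefficients forces $H_1(M;\mathbb{Z})$ to be pure torsion, so the composition $\pi_1(M)\to H_1(M;\mathbb{Z})\to \mathbb{Z}$ is trivial and $\gamma_{\ast}$ kills $\pi_1(M)$. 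Hence $\gamma$ admits a harmonic lift $\bar\gamma\colon M\to \widetilde{\mathcal{R}}$ to the universal cover of $\mathcal{R}$, and $\bar\gamma(M)$ remains compact.

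The third step applies Corollary~\ref{corollary1} on $\widetilde{\mathcal{R}}$. Lifting $\{\partial B_G(w_{X_1}(t))\}_{t\in S^1}$ along the covering map produces a family $\{\widetilde S_t\}_{t\in\mathbb{R}}$ of Jost--Xin convex hypersurfaces foliating $\widetilde{\mathcal{R}}$; each lifted leaf is convex because the covering is a local isometry (condition (i)), and because the circular parameter has now been unrolled to the real line, removing any single leaf $\widetilde S_{t_0}$ genuinely disconnects $\widetilde{\mathcal{R}}$ into a ``past'' and a ``future'' component (condition (ii)). Corollary~\ref{corollary1} then forces $\bar\gamma$ to be constant, hence $\gamma$ is constant, and constancy of the Gauss map of a compact minimal submanifold of $S^{k+2}$ immediately implies that $M$ is a totally geodesic $k$-sphere.

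The main technical obstacle I anticipate is the explicit verification that the Jost--Xin sweepout region $\mathcal{R}$ from \eqref{the main region in the grassmannian} coincides with $Q_k$ minus the distinguished hyperplane pair, for arbitrary $k$. This requires parameterising the closed geodesic $w_{X_1}(t)$ in Pl\"ucker coordinates on $Q_k$ and showing that the envelope of its orthogonal convex balls of radius $t_{X_1}$ sweeps out precisely the complement of $\varphi^{-1}(H)\cup\varphi^{-1}(H')$; the product model $S^2\times S^2$ available for $k=2$ does not generalise verbatim, so a general-$k$ calculation in the quadric coordinates is needed to pin down $\Sigma$, to confirm that $\pi_1(\mathcal{R})\cong\mathbb{Z}$, and to guarantee that the lifted foliation on $\widetilde{\mathcal{R}}$ is globally well defined and properly nested.
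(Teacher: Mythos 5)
Your proposal is correct and follows essentially the same route as the paper: identify $\mathcal{R}$ with (a subset of) the non-simply-connected region $Q_k\setminus(H\cup H')$, use $H^1(M)=0$ to lift the harmonic Gauss map to the universal cover, lift the Jost--Xin convex sweepout so that condition (ii) of Corollary~\ref{corollary1} holds after unrolling the circle parameter, and conclude constancy. Your version is in fact somewhat more careful than the paper's own proof --- the explicit computation $\pi_1(\mathcal{R})\cong\mathbb{Z}$ via $\mathbb{C}^k\setminus\mathbb{C}^{k-1}$, the universal-coefficients argument for triviality of $\gamma_*$ on $\pi_1$, and the flagged need to verify that the sweepout region really avoids $H\cup H'$ for general $k$ are all details the paper leaves implicit.
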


\begin{proof}
        By the above argument, $\mathcal{R}$ is contained in a region that is not simply connected in $Q_k$. As in the proof of Theorem~\ref{solomon}, the Gauss map lifts to a map $\tilde{g}: \tilde{M}\longrightarrow \widetilde{Q_k\setminus (H^{\prime}\cup H)}$, and the image $\tilde{g}(\tilde{M})$ is compact in $\tilde{\mathcal{R}}\subset\widetilde{Q_k\setminus (H^{\prime}\cup H)}$. Since we can lift the convex sweepout of $\mathcal{R}$ to $\tilde{\mathcal{R}}$, we conclude that $\tilde(g)$ is constant. Therefore $g$ is constant and $M$ is a totally geodesic submanifold of $S^{k+2}$.
\end{proof}

\begin{rmk}
	Note that in the codimension one case, $n=k+1$, the set $B_G(w)$ is a geodesic ball of radius $\frac{\pi}{2}$ and 
	$$
	\displaystyle\bigcup_{t\in S^1}\partial B_{G}\left(w_{X_{1}}(t)\right)
	$$
	is exaclty $S^{k+1}\setminus S^{k-1}[w]$, where $S^{k-1}[w]=\{p\in S^{k+1}\,|\,\, \left\langle p,w\right\rangle=\left\langle p,w_{X_1}\left(\pi/2\right)\right\rangle=0\}$.
\end{rmk}

\newpage 
\bibliographystyle{alpha}
\bibliography{bernstein}
\addcontentsline{toc}{section}{\bibname}

\end{document}